\newtheorem{theorem}{Theorem}
\newtheorem{lemma}{Lemma}
\newtheorem{corollary}{Corollary}
\newtheorem{prop}{Proposition}
\theoremstyle{remark}
\newtheorem{remark}{Remark}
\theoremstyle{definition}
\newtheorem{definition}{Definition}
\newcommand*\rmd{\mathop{}\!\mathrm{d}}
\newcommand*\rme{\mathop{}\!\mathrm{e}}
\theoremstyle{plain}
\newtheorem{cond}{Condition}
\begin{document}

\renewcommand*{\thefootnote}{\fnsymbol{footnote}}

\begin{center}
\large{\textbf{Bounds on the support of the multifractal spectrum of stochastic processes}}\\
Danijel Grahovac$^1$\footnote{dgrahova@mathos.hr}, Nikolai N. Leonenko$^2$\footnote{LeonenkoN@cardiff.ac.uk}\\
\end{center}

\bigskip
\begin{flushleft}
\footnotesize{
$^1$ Department of Mathematics, University of Osijek, Trg Ljudevita Gaja 6, 31000 Osijek, Croatia\\
$^2$ School of Mathematics, Cardiff University, Senghennydd Road, Cardiff, Wales, UK, CF24 4AG}\\
\end{flushleft}

\textbf{Abstract: } Multifractal analysis of stochastic processes deals with the fine scale properties of the sample paths and seeks for some global scaling property that would enable extracting the so-called spectrum of singularities. In this paper we establish bounds on the support of the spectrum of singularities. To do this, we prove a theorem that complements the famous Kolmogorov's continuity criterion. The nature of these bounds helps us identify the quantities truly responsible for the support of the spectrum. We then make several conclusions from this. First, specifying global scaling in terms of moments is incomplete due to possible infinite moments, both of positive and negative order. For the case of ergodic self-similar processes we show that negative order moments and their divergence do not affect the spectrum. On the other hand, infinite positive order moments make the spectrum nontrivial. In particular, we show that the self-similar stationary increments process with the nontrivial spectrum must be heavy-tailed. This shows that for determining the spectrum it is crucial to capture the divergence of moments. We show that the partition function is capable of doing this and also propose a robust variant of this method for negative order moments.

\section{Introduction}
The notion of multifractality first appeared in the setting of measures. The importance of scaling relations was first stressed in the work of Mandelbrot in the context of turbulence modeling (\cite{mandelbrot1972, mandelbrot1974}). Later the notion has been extended to functions and studying fine scale properties of functions (see \cite{muzy1993multifractal, jaffard1997multifractal1, jaffard1996old}). In this setting, multifractal analysis deals with the local scaling properties of functions characterized by the Hausdorff dimension of sets of points having the same H\"older exponent. Hausdorff dimension of these sets for varying H\"older exponent yields the so-called spectrum of singularities (or multifractal spectrum). The function is called multifractal if its spectrum is nontrivial, in the sense that it is not a one point set.

However, from a practical point of view, it is impossible to numerically determine the spectrum directly from the definition. Frisch and Parisi (\cite{frisch1985fully}) were the first to propose the idea of determining the spectrum based on certain average quantities, as a numerically attainable way. In order to relate this global scaling property and the local one based on the H\"older exponents, one needs ``multifractal formalism'' to hold. This is not always the case and there has been an extensive research on this topic (see \cite{jaffard1997multifractal1, riedi1995improved, jaffard1997multifractal2, jaffard2000frisch, riedi1999multifractal}). In order to overcome the problem, one takes the other way around and seeks for different definitions of global and local scaling properties that would always be related by a certain type of multifractal formalism (see \cite{jaffard2006wavelet} for an overview in the context of measures and functions). Many authors claim that wavelets provide the best way to specify the multifractal formalism, both theoretically and numerically (see e.g. \cite{jaffard2006wavelet, bacry1993singularity}).

For stochastic processes, the local scaling properties can be immediately generalized by simply applying the definition for a function on the sample paths. As a global property, the extension is not so straightforward. In \cite{MFC1997MMAR}, the authors present a theory of multifractal stochastic processes and define the scaling property in terms of the process moments. The underlying idea is to define a scaling property more general than the well known self-similarity. However, this can lead to discrepancy. For example, $\alpha$-stable L\'evy processes with $0<\alpha<2$ are known to be self-similar with index $1/\alpha$. On the other hand, it follows from \cite{jaffard1999} that the sample paths of these processes exhibit multifractal features in the sense of the nontrivial spectrum.

The goal of this paper is to make a contribution to the multifractal theory of stochastic processes by exhibiting limitations of the existing definitions and proposing methods to overcome these. The issue of infinite moments has so far been discussed mostly as a problem of the estimation methods for determining the spectrum and has been a major critic for the partition function method. To our best knowledge, our results are the first that link heavy-tails of self-similar processes with their path irregularities in this sense. It is an intriguing fact that in this case, ignorant estimation of infinite moments will yield the correct spectrum. The bounds on the support of the spectrum we derive can be used to easily detect trivial spectrum. We do this for the class of Hermite processes. Although these bounds are very general, we later restrict our attention to stationary increments processes. We consider only $\mathbb{R}$-valued stochastic processes and our treatment is intended to be probabilistic.

The paper is organized as follows. In the next section we formally state different definitions of multifractal stochastic processes and recall some implications between them. We also discuss the multifractal formalism and different estimation methods. In Section \ref{sec3} we derive general bounds that determine the support of the multifractal spectrum and relate the bounds with the moment scaling properties. We show implications of these results for self-similar stationary increments processes. Section \ref{sec4} provides examples of stochastic processes from the perspective of different definitions. We show how the results of Section \ref{sec3} apply for each example. In Section \ref{sec5} we propose a simple modification of the partition function method that overcomes divergencies of negative order moments. We illustrate on the simulated data the advantages of this modification. Appendix contains some general facts about processes considered in Section \ref{sec4}.

\section{Definitions of the multifractal stochastic processes}\label{sec2}
In this section we provide an overview of different scaling relations that are usually referred to as multifractality. Examples of processes that satisfy these properties are given in Section \ref{sec4}. All the processes considered in this paper are assumed to be measurable, separable, nontrivial (in the sense that they are not a.s. constant) and stochastically continuous at zero, meaning that for every $\varepsilon>0$, $P(|X(h)|>\varepsilon)\to 0$ as $h\to 0$.

The best known scaling relation in the theory of stochastic processes is the self-similarity. A stochastic process $\{X(t), t \geq 0\}$ is said to be self-similar if for any $a>0$, there exists $b>0$ such that
\begin{equation*}
\{X(at)\} \overset{d}{=} \{b X(t)\},
\end{equation*}
where equality is in finite dimensional distributions. If $\{X(t)\}$ is self-similar, nontrivial and stochastically continuous at $0$, then $b$ must be of the form $a^H, a>0$, for some $H\geq 0$, i.e.
\begin{equation*}
\{X(at)\} \overset{d}{=} \{a^H X(t)\}.
\end{equation*}
A proof can be found in \cite{embrechts2002}. These weak assumptions are assumed to hold for every self-similar process considered in the paper. The exponent $H$ is usually called the Hurst parameter or index and we say $\{X(t)\}$ is $H$-ss and $H$-sssi if it also has stationary increments.\\

Following \cite{MFC1997MMAR}, the definition of a multifractal that we present first is motivated by generalizing the scaling rule of self-similar processes in the following manner:
\begin{definition}\label{defD}
A stochastic process $\{X(t)\}$ is multifractal if
\begin{equation}\label{mfdefgeneral}
\{X(ct)\} \overset{d}{=} \{M(c) X(t)\},
\end{equation}
where for every $c>0$, $M(c)$ is a random variable independent of $\{X(t)\}$ whose distribution does not depend on $t$.
\end{definition}
When $M(c)$ is non-random, then $M(c)=c^H$ and the definition reduces to $H$-self-similarity. The scaling factor $M(c)$ should satisfy the following property:
\begin{equation}\label{multiplicativeproperty}
M(ab) \overset{d}{=} M_1(a) M_2(b),
\end{equation}
for every choice of $a$ and $b$, where $M_1$ and $M_2$ are independent copies of $M$. This is sometimes called log-infinite divisibility and a motivation for this property can be found in \cite{MFC1997MMAR}. In \cite{bacry2008continuous}, the authors show that \eqref{mfdefgeneral} implies \eqref{multiplicativeproperty}.\\

However, instead of Definition \eqref{defD}, scaling is usually specified in terms of moments. The idea of extracting the scaling properties from average type quantities, like $L^p$ norm, dates back to the work of Frisch and Parisi (\cite{frisch1985fully}).
\begin{definition}\label{defM}
A stochastic process $\{X(t)\}$ is multifractal if there exist functions $c(q)$ and $\tau(q)$ such that
\begin{equation}\label{mfdefEq}
E|X(t)-X(s)|^q=c(q) |t-s|^{\tau(q)}, \quad \text{for all } t,s \in \mathfrak{T}, q \in \mathfrak{Q},
\end{equation}
where $\mathfrak{T}$ and $\mathfrak{Q}$ are intervals on the real line with positive length and $0\in \mathfrak{T}$.
\end{definition}
The function $\tau(q)$ is called the scaling function. Set $\mathfrak{Q}$ can also include negative reals. The definition can also be based on the moments of the process instead of the increments. If the increments are stationary, these definitions coincide. It is clear that if $\{X(t) \}$ is $H$-sssi then $\tau(q)=Hq$. One can also show that $\tau(q)$ must be concave. Strict concavity can hold only over a finite time horizon, otherwise $\tau(q)$ would be linear. This is not considered to be a problem for practical purposes (see \cite{MFC1997MMAR} for details). Since the scaling function is linear for self-similar processes, every departure from linearity can be attributed to multifractality. However for this reasoning to make sense, one must assume moment scaling to hold as otherwise self-similarity and multifractality are not complementary notions.

The drawback of involving moments in the definition is that they can be infinite. This narrows the applicability of the definition and as we show later, can hide the information about the singularity spectrum.

It is easy to see that under stationary increments the defining property \eqref{mfdefgeneral} along with the property \eqref{multiplicativeproperty} implies multifractality Definition \ref{defM}. Indeed, \eqref{multiplicativeproperty} implies that $E|M(c)|^q$ must be of the form $c^{\tau(q)}$ and from $X(t) \overset{d}{=} M(t) X(1)$ the claim follows. One has to assume finiteness of the moments involved in order for the statements like \eqref{mfdefEq} to have sense. Also notice that both definitions imply $X(0)=0$ a.s. which will be used through the paper.

There exist many variations of the Definition \ref{defM}. Some processes, like the classical multiplicative cascade, obey the definition only for small range of values $t$ or for asymptotically small $t$. The stationarity of increments can also be imposed. When referring to multifractality we will make clear which definition we mean. However we exclude the case of self-similar processes from the preceding definitions.\\

Definition \ref{defM} provides a simple criterion for detecting the multifractal property of the data set. Consider a stationary increments process $X(t)$ defined for $t \in [0,T]$ and suppose $X(0)=0$. Divide the interval $[0,T]$ into $\lfloor T / \Delta t \rfloor$ blocks of length $\Delta t$ and define the partition function (sometimes also called the structure function):
\begin{equation}\label{partitionfun}
S_q(T,\Delta t) = \frac{1}{\lfloor T / \Delta t \rfloor} \sum_{i=1}^{\lfloor T / \Delta t \rfloor} \left| X ( i \Delta t) -  X ( (i-1) \Delta t) \right|^q.
\end{equation}
If $\{ X(t) \}$ is multifractal with stationary increments then $E S_q(T,\Delta t)= E |X (\Delta t) |^q = c(q) {\Delta t}^{\tau(q)}$. So,
\begin{equation}\label{linearrelation}
\ln E S_q(T, \Delta t)=\tau(q) \ln {\Delta t} + \ln c(q).
\end{equation}
One can also see $S_q(T, \Delta t)$ as the empirical counterpart of the left-hand side of \eqref{mfdefEq}.

As follows from \eqref{linearrelation}, it makes sense to consider $\tau(q)$ as the slope of the linear regression of $\ln S_q(T, \Delta t)$ on $\ln {\Delta t}$. In practice, one should first check that relation \eqref{linearrelation} is valid. See \cite{FCM1997multifractalityDEM, anh2010simulation} for more details on this methodology. It was shown in \cite{GL} that a large class of processes behaves as the relation \eqref{linearrelation} holds even though there is no exact moment scaling \eqref{mfdefEq}.

Suppose that the process is sampled at equidistant time points. We can assume these are the time points $1,\dots,T$ (see \cite{GL}). By choosing points $0\leq {\Delta t}_1 < \cdots < {\Delta t}_N \leq T$ and $q_j > 0$, $j=1,\dots,M$, based on the sample $X_1,\dots,X_T$ we can calculate
\begin{equation}\label{points}
\left\{ S_{q_j} (n, \Delta t_i) \ : \ i=1,\dots, N, j=1,\dots,M \right\}.
\end{equation}
Suppose that it is checked that for fixed $q$ the points $(\ln \Delta t_i , \ln S_q(T, \Delta t))$, $i=1,\dots,n$ behave approximately linear. Using the well known formula for the slope of the linear regression line, we can define the empirical scaling function:
\begin{equation}\label{tauhat}
\hat{\tau}_{N,T}(q) = \frac{\sum_{i=1}^{N}  \ln {\Delta t_i}  \ln S_q(n,\Delta t_i) - \frac{1}{N} \sum_{i=1}^{N} \ln {\Delta t_i} \sum_{j=1}^{N} \ln S_q(n,\Delta t_i) }{ \sum_{i=1}^{N} \left(\ln {\Delta t_i}\right)^2 - \frac{1}{N} \left( \sum_{i=1}^{N} \ln {\Delta t_i} \right)^2 },
\end{equation}
where $N$ is the number of time points chosen in the regression. For reference, we state the following property as a definition.
\begin{definition}\label{defE}
A stochastic process $\{X(t)\}$ is (empirically) multifractal if it has stationary increments and the empirical scaling function \eqref{tauhat} is non-linear.
\end{definition}

\begin{remark}
Although the definition \eqref{tauhat} follows naturally from the moment scaling relation \eqref{mfdefEq}, it is not very common in the literature. Usually one tries to estimate the scaling function by using only the smallest time scale available. For example, for the cascade process on the interval $[0,T]$ the smallest interval is usually of the length $2^{-j}T$ for some $j$. One can then estimate the scaling function at point $q$ as
\begin{equation}\label{tauhatalternative}
\frac{\log_2 S_q(T,2^{-j}T)}{-j}.
\end{equation}
Estimator \eqref{tauhat} estimates the scaling function across different time scales and is therefore more general than \eqref{tauhatalternative}.
\end{remark}

\subsection{Spectrum of singularities}
Preceding definitions involve ``global'' properties of the process. Alternatively, one can base the definition on the ``local'' scaling properties, such as roughness of the process sample paths measured by the pointwise H\"older exponents. There are different approaches on how to develop the notion of a multifractal function. First, we say that a function $f: [0,\infty) \to \mathbb{R}$ is $C^{\gamma}(t_0)$ if there exists constant $C>0$ such that for all $t$ in some neighborhood of $t_0$
\begin{equation*}
|f(t)-f(t_0) | \leq C |t - t_0|^{\gamma}.
\end{equation*}
One can also define that $f$ is H\"older continuous at point $t_0$ if $|f(t)-P_{t_0} (t) | \leq C |t - t_0|^{\gamma}$ for some polynomial $P_{t_0}$ of degree at most $\lfloor \gamma \rfloor$. Two definitions coincide if $\gamma<1$. Therefore we will use the former one in this paper as in many cases we consider only functions for which $\gamma<1$ at any point. For more details see \cite{riedi1999multifractal}.

A pointwise H\"older exponent of the function $f$ at $t_0$ is then
\begin{equation}\label{pointwiseHolder}
H(t_0)= \sup \left\{ \gamma : f \in C^{\gamma}(t_0) \right\}.
\end{equation}
Consider sets $S_h=\{ t : H(t)=h \}$ where $f$ has the H\"older exponent of value $h$. These sets are usually fractal in the sense that they have non-integer Hausdorff dimension. Define $d(h)$ to be the Hausdorff dimension of $S_h$, using the convention that the dimension of an empty set is $-\infty$. Function $d(h)$ is called the spectrum of singularities (also multifractal or Hausdorff spectrum). We will refer to set of $h$ such that $d(h)\neq - \infty$ as the support of the spectrum. Function $f$ is said to be multifractal if support of its spectrum contains an interval of non-empty interior. This is naturally extended to stochastic processes:
\begin{definition}\label{defL}
A stochastic process $\{X(t)\}$ on some probability space $(\Omega, \mathcal{F}, P )$ is multifractal if for (almost) every $\omega \in \Omega$, $t \mapsto X(t,\omega)$ is a multifractal function.
\end{definition}

When considered for a stochastic process, H\"older exponents are random variables and $S_h$ random sets. However in many cases the spectrum is deterministic (\cite{balanca2013}).

\subsection{Multifractal formalism}
Multifractal formalism relates local and global scaling properties by connecting singularity spectrum with the scaling function via the Legendre transform:
\begin{equation}\label{formalism}
d(h)= \inf_q \left( hq - \tau(q) +1\right).
\end{equation}
When $d(h)=-\infty$, $h$ is not the H\"older exponent, thus the convention that $\dim_{H}(\emptyset)=-\infty$. Since the Legendre transform is concave, the spectrum is always concave function, provided multifractal formalism holds. If the multifractal formalism holds, the spectrum can be estimated as the Legendre transform of the estimated scaling function.

Substantial work has been done to investigate when this formalism holds. The validity of the formalism depends which definition of $\tau$ one uses. Since it ensures that the spectrum can be estimated from computable global quantities, it is a desirable property of the object considered. This is the reason many authors seek for different definitions of global and local scaling properties that would always be related by a certain type of multifractal formalism.

The validity of the multifractal formalism is known to be narrow when the scaling function is based on the process increments (\cite{muzy1993multifractal}). It has been showed that a large class of processes can produce nonlinear scaling function and that this behaviour is influenced by the heavy tail index (\cite{GL}). These nonlinearities are not connected with the spectrum, except in the models that posses some scaling property. In many examples negative order moments can also produce concavity since in many models they are infinite. As we will show on the example of self-similar stationary increments processes, divergence of the negative order moments has nothing to do with the spectrum. Thus the estimated nonlinearity is merely an artefact of the estimation method. We propose a simple modification of the partition function that will make it more robust. On the other hand, nonlinearity that comes from diverging positive order moments is crucial in estimating the spectrum with \eqref{formalism}. For self-similar processes, increments based partition function can capture these nonlinearities correctly.

Wavelets are considered to be the best approach to define multifractality. This is usually done by basing the definition of the partition function on the wavelet decomposition of the process (see e.g. \cite{riedi1999multifractal, audit2002wavelet}). This leads to different methods for multifractal analysis based on wavelets. However, this type of definition is also sensitive to diverging moments as has been noted in \cite{gonccalves2005diverging}, where the wavelet based estimator of the tail index is proposed. Scaling based on the wavelet coefficients is also unable to yield a full spectrum of singularities. In \cite{jaffard2004wavelet}, the formalism based on wavelet leaders has been proposed. This in some sense resembles the method we propose in Section \ref{sec5}, although our motivation comes from the results given in the next section.

On the other hand, one can also replace the definition of the spectrum to achieve multifractal formalism. For other definitions of the local scaling, such as the one based on the so-called coarse H\"older exponents, see e.g. \cite{riedi1999multifractal, CFM1997large}.

The choice of the range over which the infimum in \eqref{formalism} is taken can also be a subject of discussion. From the statistical point of view, moments of negative order are not usually investigated. Sometimes $\tau(q)$ is calculated only for $q>0$ and can therefore yield only left (increasing) part of the spectrum. For more details see \cite{riedi1999multifractal, jaffard1999}.

\section{Bounds on the support of the spectrum}\label{sec3}
The fractional Brownian motion (FBM) is a Gaussian process $\{B_H(t)\}$, which starts at zero, has zero expectation for every $t$ and the following covariance function
\begin{equation*}
E B_H(t) B_H(s) = \frac{1}{2} \left( |t|^{2H} + |s|^{2H} - |t-s|^{2H} \right), \quad H \in (0,1).
\end{equation*}
If $H=1/2$, FBM is the standard Brownian motion (BM). FBM is $H$-sssi and has a trivial spectrum consisting of only one point, i.e. $d(H)=1$, and $d(h)=-\infty$ for $h\neq H$. So there is no doubt that FBM is self-similar and not multifractal in the sense of all definitions considered. However some self-similar processes have nontrivial spectrum. Our goal in this section is to identify the property of the process that makes the spectrum nontrivial.

We do this by deriving the bounds on the support of the spectrum. The lower bound is a consequence of the well-known Kolmogorov's continuity theorem. For the upper bound we prove a sort of complement of this theorem.

Before we proceed, we fix the following notation for some general process $\{X(t), t \in [0,T]\}$. We denote the range of finite moments as $\mathfrak{Q}=(\underline{q},\overline{q})$, i.e.
\begin{equation}\label{qLU}
\begin{aligned}
\overline{q} &= \sup \{ q >0 : E|X(T)|^q < \infty \},\\
\underline{q} &= \inf \{ q <0 : E|X(T)|^q < \infty \}.
\end{aligned}
\end{equation}
If $\{X(t)\}$ is multifractal in the sense of Definition \ref{defM} with the scaling function $\tau$ define
\begin{equation}\label{H-+}
\begin{aligned}
H^- &= \sup \left\{ \frac{\tau(q)}{q} - \frac{1}{q} : q \in (0, \overline{q}) \  \& \ \tau(q)>1 \right\},\\
\widetilde{H^+} &= \inf \left\{ \frac{\tau(q)}{q} - \frac{1}{q} : q \in (\underline{q},0) \  \& \ \tau(q)<1 \right\}.
\end{aligned}
\end{equation}

\subsection{The lower bound}
Using the well known Kolmogorov's criterion it is easy to derive the lower bound on the support of the spectrum. The proof of the following theorem can be found in \cite[Theorem 2.8]{karatzas1991brownian}.

\begin{theorem}[Kolmogorov-Chentsov]\label{thm:Kolmogorov-Chentsov}
Suppose that a process $\{X(t), t \in [0,T]\}$ satisfies
\begin{equation}\label{kolmomcrit}
E|X(t)-X(s)|^{\alpha} \leq C |t-s|^{1+\beta},
\end{equation}
for some positive constants $\alpha,\beta,C$. Then there exists a modification $\{\tilde{X}(t), t \in [0,T]\}$ of $\{X(t)\}$, which is locally H\"older continuous with exponent $\gamma$ for every $\gamma \in (0,\beta/\alpha)$. This means that there exists some a.s. positive random variable $h(\omega)$ and constant $\delta>0$ such that
\begin{equation*}
P \left( \omega : \sup_{|t-s|<h(\omega), \ s,t \in [0,T]} \frac{|\tilde{X}(t,\omega)-\tilde{X}(s,\omega)|}{|t-s|^{\gamma}} \leq \delta \right)=1.
\end{equation*}
\end{theorem}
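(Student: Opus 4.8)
The plan is to run the classical dyadic chaining argument. First I would convert the moment bound into a tail bound: by Markov's inequality, \eqref{kolmomcrit} gives, for every $\lambda>0$,
\begin{equation*}
P\left( |X(t)-X(s)| \geq \lambda \right) \leq \frac{C}{\lambda^{\alpha}} |t-s|^{1+\beta}.
\end{equation*}
Fix $\gamma \in (0,\beta/\alpha)$ and apply this to the consecutive dyadic pairs of generation $n$, namely $s_k=(k-1)2^{-n}T$ and $t_k=k2^{-n}T$ for $k=1,\dots,2^{n}$, with $\lambda=2^{-\gamma n}$. A union bound over the $2^{n}$ pairs yields
\begin{equation*}
P\left( \max_{1\le k \le 2^{n}} |X(k2^{-n}T)-X((k-1)2^{-n}T)| \geq 2^{-\gamma n} \right) \leq C\, T^{1+\beta}\, 2^{-n(\beta-\gamma\alpha)},
\end{equation*}
and since $\beta-\gamma\alpha>0$ the right-hand side is summable in $n$. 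By the Borel--Cantelli lemma there is an a.s.\ finite random index $N(\omega)$ such that for all $n\geq N(\omega)$ the displayed maximum is strictly below $2^{-\gamma n}$.

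Next comes the chaining step, which I expect to be the technical heart of the proof. Working on the countable dense set $D=\bigcup_{n}\{k2^{-n}T : 0\le k\le 2^{n}\}$, one argues by induction on generations: if $s,t\in D$ with $0<t-s<2^{-m}T$ and $m\geq N(\omega)$, then expressing $s$ and $t$ through their finite dyadic expansions and telescoping the increments generation by generation bounds each piece by $2^{-\gamma j}$, $j>m$, so that
\begin{equation*}
|X(t,\omega)-X(s,\omega)| \leq \frac{2}{1-2^{-\gamma}}\, 2^{-\gamma m} \leq \delta\, |t-s|^{\gamma}
\end{equation*}
for a deterministic constant $\delta$ depending only on $\gamma$ and $T$. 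Thus $t\mapsto X(t,\omega)$ is uniformly continuous on $D$, hence extends to a path-continuous process $\{\widetilde{X}(t), t\in[0,T]\}$ (set $\widetilde{X}\equiv 0$ on the null event where $N=\infty$), and the H\"older bound above persists for $\widetilde{X}$ on all of $[0,T]$ by continuity, with $h(\omega)=2^{-N(\omega)}T$.

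Finally I would verify that $\widetilde{X}$ is a modification of $X$. The bound \eqref{kolmomcrit} forces $X(s)\to X(t)$ in $L^{\alpha}$, hence in probability, as $s\to t$; choosing dyadic $s_n\to t$ we also have $X(s_n)=\widetilde{X}(s_n)\to\widetilde{X}(t)$ almost surely, hence in probability, so the two limits agree and $P(\widetilde{X}(t)=X(t))=1$ for each fixed $t\in[0,T]$. The only real difficulty here is bookkeeping in the chaining estimate — tracking which generations contribute and confirming the geometric series closes to give a clean power of $|t-s|$ — everything else being a routine application of Markov's inequality and the Borel--Cantelli lemma.
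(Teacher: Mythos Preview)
Your argument is the standard dyadic chaining proof and is correct. Note, however, that the paper does not actually prove this theorem: it simply cites \cite[Theorem 2.8]{karatzas1991brownian}, and the argument given there is precisely the Markov inequality / Borel--Cantelli / dyadic chaining route you have outlined.
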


\bigskip

\begin{prop}\label{prop1}
Suppose $\{X(t), t \in [0,T]\}$ is multifractal in the sense of Definition \ref{defM}. If for some $q>0$, $E|X(T)|^q<\infty$ and $\tau(q)>1$, then there exists a modification of $\{X(t)\}$ which is locally H\"older continuous with exponent $\gamma$ for every
\begin{equation*}
\gamma \in \left(0,\frac{\tau(q)}{q} - \frac{1}{q} \right).
\end{equation*}
In particular, there exist a modification such that for almost every sample path,
\begin{equation*}
H^- \leq H(t) \quad \text{ for each } t \in [0,T],
\end{equation*}
where $H(t)$ is defined by \eqref{pointwiseHolder} and $H^-$ by \eqref{H-+}.
\end{prop}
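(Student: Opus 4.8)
The plan is to read the hypothesis of the Kolmogorov--Chentsov theorem (Theorem \ref{thm:Kolmogorov-Chentsov}) directly off Definition \ref{defM}. Fix a $q$ as in the statement; since multifractality in the sense of Definition \ref{defM} forces $X(0)=0$ a.s., we have $E|X(t)-X(s)|^q = c(q)|t-s|^{\tau(q)}$ for all $s,t\in[0,T]$. Taking $\alpha=q$, $\beta=\tau(q)-1>0$ and $C=c(q)$ in \eqref{kolmomcrit}, Theorem \ref{thm:Kolmogorov-Chentsov} supplies a modification of $\{X(t)\}$ that is locally H\"older continuous with exponent $\gamma$ for every $\gamma\in(0,\beta/\alpha)$, and $\beta/\alpha=\frac{\tau(q)-1}{q}=\frac{\tau(q)}{q}-\frac1q$. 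This is the first assertion.

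For the second assertion I would first dispose of the trivial case: if no $q\in(0,\overline q)$ satisfies $\tau(q)>1$, then $H^-=\sup\emptyset=-\infty$ and $H^-\le H(t)$ holds vacuously. Otherwise, fix one admissible $q_0$ and let $\{\widetilde X(t)\}$ be the continuous (indeed locally H\"older) modification produced above for $q_0$. The key point is that $\widetilde X$ is again a modification of $X$, so $E|\widetilde X(t)-\widetilde X(s)|^q = c(q)|t-s|^{\tau(q)}$ continues to hold for \emph{every} admissible $q$; applying Theorem \ref{thm:Kolmogorov-Chentsov} to $\widetilde X$ with such a $q$ yields a locally H\"older modification of $\widetilde X$ which, being a continuous modification of the continuous process $\widetilde X$, is indistinguishable from $\widetilde X$. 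Hence $\widetilde X$ itself is, almost surely, locally H\"older continuous with exponent $\gamma$ for every $\gamma<\frac{\tau(q)-1}{q}$.

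Since $q\mapsto\frac{\tau(q)-1}{q}$ need not be monotone, I would then pick a sequence $(q_n)$ of admissible values with $\frac{\tau(q_n)-1}{q_n}\to H^-$ and, for each $n$, rationals $\gamma_{n,k}\uparrow\frac{\tau(q_n)-1}{q_n}$; discarding the countable union of the associated null sets gives a modification $\widetilde X$ for which almost every path lies in $C^{\gamma}(t_0)$ for every $t_0\in[0,T]$ and every rational $\gamma<H^-$ --- here one uses that the uniform bound in Theorem \ref{thm:Kolmogorov-Chentsov}, applied on a neighbourhood of a fixed $t_0$, is precisely the statement $\widetilde X\in C^\gamma(t_0)$. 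By the definition \eqref{pointwiseHolder} of the pointwise H\"older exponent, $H(t_0)\ge\gamma$ for all such $\gamma$, hence $H(t_0)\ge H^-$ for every $t_0\in[0,T]$.

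The only genuinely delicate step is the patching in the second part: producing a \emph{single} modification that is simultaneously locally H\"older for the whole admissible range of $q$. This is handled by the uniqueness of continuous modifications up to indistinguishability, after which the argument reduces to a countable union of null sets and involves no analytic difficulty.
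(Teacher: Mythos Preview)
Your proof is correct and follows the same route as the paper: read off the Kolmogorov--Chentsov hypothesis directly from the moment-scaling relation with $\alpha=q$, $\beta=\tau(q)-1$, then pass from local H\"older continuity to the pointwise exponent bound. Your treatment of the second assertion is in fact more careful than the paper's---the paper dispenses with the patching over all admissible $q$ in a single sentence, whereas you spell out the indistinguishability argument needed to produce one modification that works simultaneously for every $q$.
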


\begin{proof}
This is a simple consequence of Theorem \ref{thm:Kolmogorov-Chentsov} since Definition \ref{defM} implies
\begin{equation*}
E|X(t)-X(s)|^q=c(q) |t-s|^{1+(\tau(q)-1)}.
\end{equation*}
Fixing $s$ in the definition of the local H\"older exponent gives the pointwise H\"older exponent.
\end{proof}

In the sequel we always suppose to work with the modification from Proposition \ref{prop1}. We can conclude that the spectrum $d(h)=-\infty$ for $h \in (0,H^-)$. This way we can establish an estimate for the left endpoint of the interval where the spectrum is defined. It also follows that if the process is $H$-sssi and has finite moments of every positive order, then $H^-=H\leq H(t)$. Thus, when moment scaling holds, path irregularities are closely related with infinite moments of positive order. We make this point stronger later.

Theorem \ref{thm:Kolmogorov-Chentsov} is valid for general stochastic processes. Although moment condition \eqref{kolmomcrit} is appealing, the condition needed for the proof of Theorem \ref{thm:Kolmogorov-Chentsov} can be stated in a different form. If we assume stationarity of the increments, other forms can also be derived. Some of them may seem strange at the moment but will prove to be useful later on.

\begin{lemma}\label{lemma:kolconditions}
Suppose that $\{X(t), t \in [0,T]\}$ is a stochastic process. Then there exists a modification of $\{X(t)\}$ which is a.s. locally H\"older continuous of order $\gamma>0$ if any of the following holds:
\newcounter{saveenum}
\begin{enumerate}[(i)]
  \item for some $\eta>1$ it holds that for every $s\in [0,T)$ and $C>0$
  \begin{equation*}
  P \left( \left| X(s+t) - X(s) \right| \geq C t^{\gamma} \right) = O(t^{\eta}), \quad \text{ as } t \to 0,
  \end{equation*}
  \item for some $m \in \mathbb{N}$, $\eta>1$  it holds that for every $s\in [0,T)$ and $C>0$
  \begin{equation*}
  P \left(\max_{l=1,\dots,m} \left| X(s+lt) - X(s+(l-1)t) \right| \geq C t^{\gamma} \right) = O(t^{\eta}), \quad \text{ as } t \to 0
  \end{equation*}
  \item for some $m \in \mathbb{N}$, $\alpha>0$ and $\beta > \alpha \gamma + 1$ it holds that for every $s\in [0,T)$
  \begin{equation*}
  E \left[  \max_{l=1,\dots,m} \left| X(s+lt) - X(s+(l-1)t) \right| \right]^{\alpha} = O \left( t^{ \beta} \right), \quad \text{ as } t \to 0.
  \end{equation*}
\end{enumerate}
If $\{X(t)\}$ has stationary increments it is enough to consider only $s=0$.
\end{lemma}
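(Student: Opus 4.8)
The plan is to show that each of the three conditions (i), (ii), (iii) reduces to the hypothesis of the Kolmogorov--Chentsov theorem (Theorem \ref{thm:Kolmogorov-Chentsov}), or to a minor variant of it that is proved by the same dyadic argument. Conditions (i) and (ii) are tail-probability statements, while (iii) is a moment statement; the natural route is (iii) $\Rightarrow$ (ii) $\Rightarrow$ (i) $\Rightarrow$ (conclusion), with the last implication being essentially the content of Theorem \ref{thm:Kolmogorov-Chentsov} after one rewrites a moment bound as a tail bound via Markov's inequality. So I would organize the proof around showing (i) suffices, and then deriving (ii) $\Rightarrow$ (i) and (iii) $\Rightarrow$ (ii).

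First I would treat (i). Fix $\gamma>0$ and suppose $P(|X(s+t)-X(s)|\geq C t^\gamma)=O(t^\eta)$ for some $\eta>1$, uniformly over $s$. Inspecting the proof of Theorem \ref{thm:Kolmogorov-Chentsov} (e.g.\ the dyadic chaining argument in \cite[Theorem 2.8]{karatzas1991brownian}), what is actually used is precisely a bound of the form $P(|X(t)-X(s)|\geq \varepsilon)\leq K|t-s|^{1+\beta}\varepsilon^{-\alpha}$ for dyadic increments; but the chaining only ever evaluates this at $\varepsilon$ proportional to $|t-s|^\gamma$. Thus it is enough to have $P(|X(s+t)-X(s)|\geq C t^\gamma)\leq K t^{1+\beta}$ for all dyadic $t=2^{-n}$ and all dyadic $s$, with $\beta=\eta-1>0$, which is exactly hypothesis (i) with the constant absorbed. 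Applying the Borel--Cantelli argument over the dyadic grid then yields the a.s.\ local Hölder continuity of the modification of order $\gamma'$ for any $\gamma'<\gamma$; since we may run the argument for $\gamma$ itself (the statement only asks for \emph{some} order $\gamma>0$, and condition (i) is posited at that $\gamma$), we conclude. The point $s=0$ reduction under stationary increments is immediate: $P(|X(s+t)-X(s)|\geq Ct^\gamma)=P(|X(t)-X(0)|\geq Ct^\gamma)=P(|X(t)|\geq Ct^\gamma)$ does not depend on $s$, so the uniform-in-$s$ hypothesis collapses to the single statement at $s=0$.

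Next, (ii) $\Rightarrow$ (i) is trivial: $|X(s+t)-X(s)|\leq \max_{l=1,\dots,m}|X(s+lt)-X(s+(l-1)t)|$ already for $m=1$, and more usefully one recovers increments over length $jt$ for $j\leq m$ by the triangle inequality, $|X(s+jt)-X(s)|\leq \sum_{l=1}^j |X(s+lt)-X(s+(l-1)t)| \leq m\max_l|\cdots|$, so the event in (i) at scale $jt$ is contained in the event in (ii) at scale $t$ with $C$ replaced by $C/m^{1+\gamma}$ or similar; since bounding increments at a cofinite set of dyadic-type scales suffices for chaining, (ii) gives (i). Finally (iii) $\Rightarrow$ (ii) is Markov's inequality: $P(\max_l|X(s+lt)-X(s+(l-1)t)|\geq Ct^\gamma)\leq C^{-\alpha}t^{-\alpha\gamma}E[\max_l|\cdots|]^\alpha = O(t^{\beta-\alpha\gamma})$, and $\beta-\alpha\gamma>1$ by hypothesis, so we get (ii) with $\eta=\beta-\alpha\gamma>1$.

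The main obstacle is not any single estimate but making precise the claim that the standard proof of Theorem \ref{thm:Kolmogorov-Chentsov} only uses the moment/tail bound at increments of the special form $\varepsilon\asymp |t-s|^\gamma$ — i.e., isolating from the chaining argument exactly which evaluations of \eqref{kolmomcrit} are invoked, and checking that allowing the bound to hold only at dyadic scales (rather than all $t$) and only asymptotically as $t\to 0$ (rather than with a uniform constant) still closes the Borel--Cantelli step. This is routine but needs to be stated carefully; once it is done, the implications among (i), (ii), (iii) are elementary.
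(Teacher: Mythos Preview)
Your proposal is correct and follows essentially the same route as the paper: the paper's proof is the three-line argument ``(i) is sufficient by inspecting the proof of Theorem~\ref{thm:Kolmogorov-Chentsov}; (ii) $\Rightarrow$ (i) since $m$ is fixed; (iii) $\Rightarrow$ (ii) by Chebyshev's inequality,'' which is exactly your chain (iii) $\Rightarrow$ (ii) $\Rightarrow$ (i) $\Rightarrow$ conclusion. Your treatment of (ii) $\Rightarrow$ (i) is slightly over-elaborated --- the single observation that $|X(s+t)-X(s)|$ is the $l=1$ term in the maximum already gives the event inclusion and hence the probability bound --- but otherwise the arguments match.
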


\begin{proof}
That $(i)$ is sufficient is obvious from the proof of Theorem \ref{thm:Kolmogorov-Chentsov}; see \cite[Theorem 2.8]{karatzas1991brownian}. Since $m$ is fixed it is easy to see that $(ii)$ implies $(i)$. That $(iii)$ implies $(ii)$ follows from the Chebyshev's inequality.
\end{proof}

\subsection{The upper bound}

It is considered that the negative order moments determine the right part of the spectrum. We show that this is only partially true, as this depends on whether the negative order moments are finite. To establish the bound on the right endpoint of the spectrum, one needs to show that sample paths are nowhere H\"older continuous of some order $\gamma$, i.e. that a.s. $t \mapsto X_t \notin C^{\gamma}(t_0)$ for each $t_0\in [0,T]$. To show this we first use a criterion based on the negative order moments, similar to \eqref{kolmomcrit}. The resulting theorem can be seen as a sort of a complement of the Kolmogorov-Chentsov theorem. We then apply this to moment scaling multifractals to get an estimate for the support of the spectrum.

\begin{theorem}\label{thm:ComplementKolmogorov-Chentsov}
Suppose that a process $\{X(t), t \in [0,T]\}$ defined on some probability space $(\Omega,\mathcal{F}, P)$ satisfies
\begin{equation}\label{kolmomcritnega}
E|X(t)-X(s)|^{\alpha} \leq C |t-s|^{1+\beta},
\end{equation}
for all $t,s \in [0,T]$ and for some constants $\alpha<0$, $\beta<0$ and $C>0$. Then, for $P$-a.e. $\omega\in \Omega$ it holds that for each $\gamma > \beta/\alpha$ the path $t \mapsto X_t(\omega)$ is nowhere H\"older continuous of order $\gamma$.
\end{theorem}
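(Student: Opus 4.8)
The plan is to reduce the assertion ``$t\mapsto X_t(\omega)$ is nowhere H\"older of order $\gamma$'' to a statement about the behaviour of $X$ along dyadic time points, and then to annihilate the resulting exceptional event by a union bound in which Chebyshev's inequality is applied to the \emph{negative} power $\alpha$. The heuristic is exactly dual to Kolmogorov--Chentsov: condition \eqref{kolmomcrit} says increments cannot be too large and forces continuity, whereas \eqref{kolmomcritnega} says increments cannot be too small and should force irregularity at \emph{every} point.

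First I would fix $\gamma>\beta/\alpha$ (note $\beta/\alpha>0$ since $\alpha,\beta<0$) and suppose that, for some $\omega$, $X_\cdot(\omega)\in C^{\gamma}(t_0)$ for some $t_0\in[0,T]$; by definition there are $C_0>0$ and a neighbourhood of $t_0$ on which $|X(t)-X(t_0)|\le C_0|t-t_0|^{\gamma}$. For each large $n$ choose a dyadic interval $[k_n2^{-n},(k_n+1)2^{-n}]\subseteq[0,T]$ lying within distance $2^{-n}$ of $t_0$; the triangle inequality gives
\begin{equation*}
\bigl|X((k_n+1)2^{-n})-X(k_n2^{-n})\bigr|\le 2C_0\,2^{-n\gamma}.
\end{equation*}
Hence, with $K:=\lceil 2C_0\rceil$ and $N$ large enough, $\omega\in A_{K,N}$, where
\begin{equation*}
A_{K,N}:=\bigcap_{n\ge N}\ \bigcup_{k}\ \Bigl\{\bigl|X((k+1)2^{-n})-X(k2^{-n})\bigr|\le K\,2^{-n\gamma}\Bigr\},
\end{equation*}
the inner union being over the $\le 2^nT+1$ admissible dyadic indices at level $n$. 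Since $C_0$ and the neighbourhood depend on $\omega$ while $K,N$ range over $\mathbb{N}$, the event that \emph{some} $t_0$ is a H\"older point of order $\gamma$ is contained in $\bigcup_{K,N\in\mathbb{N}}A_{K,N}$, a countable union of measurable sets (even though the H\"older point itself need not be a measurable function of $\omega$).

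It then suffices to show $P(A_{K,N})=0$ for all $K,N$. Because $\alpha<0$, the map $x\mapsto x^{\alpha}$ is decreasing, so $\{|Y|\le a\}=\{|Y|^{\alpha}\ge a^{\alpha}\}$, and Markov's inequality together with \eqref{kolmomcritnega} yields, for a single dyadic increment at level $n$,
\begin{equation*}
P\bigl(|X((k+1)2^{-n})-X(k2^{-n})|\le K2^{-n\gamma}\bigr)\ \le\ \frac{E|X((k+1)2^{-n})-X(k2^{-n})|^{\alpha}}{(K2^{-n\gamma})^{\alpha}}\ \le\ C\,K^{-\alpha}\,2^{-n(1+\beta-\gamma\alpha)}.
\end{equation*}
Summing over the $\le 2^nT+1$ indices, for every $n\ge N$ one gets $P(A_{K,N})\le P\bigl(\bigcup_k\{\cdots\}\bigr)\le C'K^{-\alpha}T\,2^{-n(\beta-\gamma\alpha)}$. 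Now $\gamma>\beta/\alpha$ together with $\alpha<0$ gives $\gamma\alpha<\beta$, i.e. $\beta-\gamma\alpha>0$, so letting $n\to\infty$ forces $P(A_{K,N})=0$; a countable union over $K,N$ then kills one fixed $\gamma$. Finally, to get all $\gamma>\beta/\alpha$ simultaneously I would apply this along a sequence $\gamma_j\downarrow\beta/\alpha$ and use $C^{\gamma}(t_0)\subseteq C^{\gamma_j}(t_0)$ whenever $\gamma\ge\gamma_j$, so that the single null set $\bigcup_j\bigcup_{K,N}A^{(\gamma_j)}_{K,N}$ works for every $\gamma>\beta/\alpha$.

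The computation is routine; the two points one must get right are (a) the reversal of Markov's inequality caused by $\alpha<0$, and (b) checking that controlling \emph{one} dyadic increment per scale already suffices, so that no independence of increments is needed. Point (b) works precisely because the hypothesis carries exponent $1+\beta$ rather than $\beta$: the extra ``$1$'' is exactly the power of $2^{-n}$ needed to absorb the $\asymp 2^{n}$ dyadic blocks in the union bound, leaving the strict surplus $\beta-\gamma\alpha>0$ — the same surplus that drives the Kolmogorov--Chentsov direction (there with $\alpha>0$), which is why the two theorems are genuine mirror images. One should also note that the bound produced depends only on the ratio $\beta/\alpha$, hence is informative only when $\beta/\alpha<1$; this is the feature that will later make the divergence of negative moments irrelevant to the spectrum of ergodic self-similar processes.
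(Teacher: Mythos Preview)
Your proof is correct and follows essentially the same route as the paper's: both show that a H\"older point of order $\gamma$ forces, at every fine scale, at least one grid increment to be small, then kill the resulting $\bigcap_n\bigcup_k$ event by a reversed Chebyshev inequality for the negative exponent $\alpha$ together with a union bound over the $O(\text{grid size}^{-1})$ blocks, the exponent $1+\beta$ leaving the surplus $\beta-\gamma\alpha>0$. The only differences are cosmetic---you use dyadic scales $2^{-n}$ where the paper uses $1/n$, and you pass to all $\gamma>\beta/\alpha$ via a sequence $\gamma_j\downarrow\beta/\alpha$ where the paper intersects over rational $\gamma$.
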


\begin{proof}
If suffices to prove the statement by fixing arbitrary $\gamma > \beta/\alpha$. Indeed, this would give events $\Omega_{\gamma}$, $P(\Omega_{\gamma})=0$ such that for $\omega \in \Omega \backslash \Omega_{\gamma}$, $t \mapsto X_t(\omega)$ is nowhere H\"older continuous of order $\gamma$. If $\Omega_0$ is the union of $\Omega_{\gamma}$ over all $\gamma \in (\beta/\alpha, \infty) \cap \mathbb{Q}$, then $\Omega_0 \in \mathcal{F}$, $P(\Omega_0)=0$ and $\Omega \backslash \Omega_0$ would fit the statement of the theorem.

For notational simplicity, we assume $T=1$. For $j,k\in \mathbb{N}$ define the set
\begin{equation*}
M_{jk} := \bigcup_{t\in [0,1]} \bigcap_{h \in [0,1/k]} \left\{ \omega \in \Omega : |X_{t+h}(\omega) - X_t(\omega)| \leq j h^{\gamma} \right\}.
\end{equation*}
It is clear that if $\omega \notin M_{jk}$ for every $j,k \in \mathbb{N}$, then $t \mapsto X_t(\omega)$ is nowhere H\"older continuous of order $\gamma$. As there is countably many $M_{jk}$, it is enough to fix arbitrary $j,k \in \mathbb{N}$ and show that $M_{jk} \subset A$ for some $A\in \mathcal{F}$ such that $P(A)=0$.

Suppose $n>2 k$ and $\omega \in M_{jk}$. Then there is some $t \in [0,1]$ such that
\begin{equation}\label{doktm2pom1}
|X_{t+h}(\omega)-X_t(\omega) | \leq j h^{\gamma}, \quad \text{ for all } h\in [0,1/k].
\end{equation}
Take $i \in \{1,\dots,n\}$ such that
\begin{equation}\label{doktm2pom2}
\frac{i-1}{n} \leq t < \frac{i}{n}.
\end{equation}
Then since $n>2 k$ we have
\begin{equation*}
0 \leq \frac{i}{n} - t < \frac{i+1}{n} - t \leq \frac{i+1}{n} - \frac{i-1}{n} = \frac{2}{n},
\end{equation*}
and from \eqref{doktm2pom1} it follows
\begin{equation*}
|X_{\frac{i+1}{n}}(\omega)-X_{\frac{i}{n}}(\omega) | \leq  |X_{\frac{i+1}{n}}(\omega) - X_t( \omega)| + |X_t( \omega)-X_{\frac{i}{n}}(\omega) | \leq 2 j n^{-\gamma}.
\end{equation*}
Put $A_i^{(n)}=\left\{ |X_{\frac{i+1}{n}}-X_{\frac{i}{n}} | \leq 2 j n^{-\gamma} \right\}$. Since $\omega$ was arbitrary it follows that
$$M_{jk} \subset \bigcup_{i=1}^n A_i^{(n)}.$$
Using Chebyshev's inequality for $\alpha<0$ and the assumption of the theorem we get
\begin{equation}\label{condinproof}
\begin{aligned}
P(A_i^{(n)}) &\leq \frac{E |X_{\frac{i+1}{n}}-X_{\frac{i}{n}} |^{\alpha} }{(2j)^{\alpha} n^{-\gamma \alpha}} \leq C (2j)^{-\alpha} n^{\gamma \alpha - 1 - \beta},\\
P \left(\bigcup_{i=1}^n A_i^{(n)} \right) &\leq \sum_{i=1}^n P(A_i^{(n)}) \leq C (2j)^{-\alpha} n^{-(\beta - \gamma \alpha)}.
\end{aligned}
\end{equation}
If we set
\begin{equation*}
A = \bigcap_{n > k} \bigcup_{i=1}^n A_i^{(n)},
\end{equation*}
then $A \in \mathcal{F}$ and $M_{jk} \subset A$. Since $\gamma> \beta / \alpha$, it follows that $\beta- \gamma \alpha>0$ and hence $P(A)=0$. This proves the theorem.
\end{proof}

\begin{prop}\label{prop2}
Suppose $\{X(t), t \in [0,T]\}$ is multifractal in the sense of Definition \ref{defM}. If for some $q<0$, $E|X(T)|^q<\infty$ and $\tau(q)<1$, then almost every sample path of $\{X(t)\}$ is nowhere H\"older continuous of order $\gamma$ for every
\begin{equation*}
\gamma \in \left(\frac{\tau(q)}{q} - \frac{1}{q}, \ +\infty \right).
\end{equation*}
In particular, for almost every sample path,
\begin{equation*}
H(t) \leq \widetilde{H^+} \quad \text{ for each } t \in [0,T].
\end{equation*}
\end{prop}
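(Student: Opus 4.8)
The plan is to apply Theorem~\ref{thm:ComplementKolmogorov-Chentsov} directly to the moment scaling relation and then translate its conclusion on nowhere H\"older continuity into the pointwise bound on $H(t)$. Fix any $q<0$ with $E|X(T)|^q<\infty$ and $\tau(q)<1$; by hypothesis at least one such $q$ exists, and since $[0,T]$ lies in the range where moment scaling holds, Definition~\ref{defM} gives, for all $t,s\in[0,T]$,
\[
E|X(t)-X(s)|^q = c(q)\,|t-s|^{\tau(q)} = c(q)\,|t-s|^{1+(\tau(q)-1)}.
\]
Because $q<0$ and $\tau(q)-1<0$, this is precisely hypothesis~\eqref{kolmomcritnega} of Theorem~\ref{thm:ComplementKolmogorov-Chentsov} with $\alpha=q$, $\beta=\tau(q)-1$ and $C=c(q)>0$. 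The theorem then yields a $P$-null set outside of which, for every $\gamma>\beta/\alpha=\tau(q)/q-1/q$, the path $t\mapsto X_t(\omega)$ is nowhere H\"older continuous of order $\gamma$; this is the first assertion.

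Next I would deduce the pointwise bound. Set $c:=\tau(q)/q-1/q$. I claim that if a sample path is nowhere H\"older continuous of order $\gamma$ for every $\gamma>c$, then $H(t_0)\le c$ at every $t_0\in[0,T]$: otherwise $H(t_0)>c$, and choosing $\gamma$ with $c<\gamma<H(t_0)$ we would have $X\in C^{\gamma}(t_0)$ by the definition~\eqref{pointwiseHolder} of the pointwise H\"older exponent as a supremum, contradicting nowhere-H\"older continuity of order $\gamma$ at $t_0$. Hence, off the null set from the previous paragraph, $H(t)\le\tau(q)/q-1/q$ for every $t\in[0,T]$.

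To upgrade this to the uniform bound $H(t)\le\widetilde{H^+}$, I would pick a sequence $q_n\in(\underline{q},0)$ with $\tau(q_n)<1$ and $\tau(q_n)/q_n-1/q_n\to\widetilde{H^+}$, which exists by the definition~\eqref{H-+} of $\widetilde{H^+}$ as an infimum, apply the previous step to each $q_n$, and discard the countable union of the associated null sets; on the complement one has $H(t)\le\tau(q_n)/q_n-1/q_n$ for all $n$ and all $t$, so letting $n\to\infty$ gives $H(t)\le\widetilde{H^+}$. The substance of the argument is entirely carried by Theorem~\ref{thm:ComplementKolmogorov-Chentsov}; the only points needing care are the elementary equivalence between nowhere H\"older continuity of every order larger than $c$ and the bound $H(\cdot)\le c$, and arranging the bookkeeping so that a single exceptional set of probability zero serves simultaneously for every $t\in[0,T]$ and every admissible $q_n$ (the union over rational $\gamma$ being already absorbed inside Theorem~\ref{thm:ComplementKolmogorov-Chentsov}).
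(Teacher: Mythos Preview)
Your proof is correct and follows the same approach as the paper: the first assertion is obtained by writing the moment scaling relation as $E|X(t)-X(s)|^q=c(q)|t-s|^{1+(\tau(q)-1)}$ and applying Theorem~\ref{thm:ComplementKolmogorov-Chentsov} with $\alpha=q<0$ and $\beta=\tau(q)-1<0$. The paper's own proof stops there; your explicit derivation of the pointwise bound $H(t)\le\widetilde{H^+}$ via a minimizing sequence $(q_n)$ and a countable union of null sets simply spells out what the paper leaves implicit.
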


\begin{proof}
Definition \ref{defM} implies
\begin{equation*}
E|X(t)-X(s)|^q=c(q) |t-s|^{1+(\tau(q)-1)}.
\end{equation*}
Since $q<0$, $\tau(q)<0$ the statement follows from Theorem \ref{thm:ComplementKolmogorov-Chentsov}.
\end{proof}

This proposition shows that $d(h)=-\infty$ for $h \in (\widetilde{H^+},\infty)$. Recall that $\widetilde{H^+}$ is defined in \eqref{H-+}.

\begin{remark}\label{remark2}
Statements like the ones in the Proposition \ref{prop1} and \ref{prop2} are stronger than saying, for example, that for every $t \in [0,T]$, $H(t) \leq C$ almost surely. Indeed, an application of the Fubini's theorem would yield that for almost every path, $H(t) \leq C$ for almost every $t$. If we put $h=C + \delta$, then the Lebesgue measure of the set $S_h=\{ t : H(t)=h \}$ is zero a.s. This, however, does not imply that $d(h)=-\infty$ and hence it is impossible to say something about the spectrum of almost every sample path. On the other hand, it is clear that this type of statements are implied by Propositions \ref{prop1} and \ref{prop2}.

For the example of this weaker type of the bound, consider $\{X(t), t \in [0,T]\}$ multifractal in the sense of Definition \ref{defM}. If for some $q<0$, $E|X(t)|^q<\infty$, then for every $t \in [0,T]$
\begin{equation*}
H(t) \leq \frac{\tau(q)}{q} \text{ a.s.}
\end{equation*}
Indeed, let $\delta>0$ and suppose $C>0$. Since $q<0$, by the Chebyshev's inequality
$$P \left( \left| X(t+\varepsilon) - X(t) \right| \leq C \varepsilon^{\frac{\tau(q)}{q}+\delta} \right) \leq \frac{E \left| X(t+\varepsilon) - X(t) \right|^q }{C^q \varepsilon^{\tau(q)+\delta q}} = \frac{c(q)}{C^q \varepsilon^{\delta q}} \to 0,$$
as $\varepsilon \to 0$. We can choose a sequence $(\varepsilon_n)$ that converges to zero such that
$$P \left( \left| X(t+\varepsilon_n) - X(t) \right| \leq C \varepsilon_n^{\frac{\tau(q)}{q}+\delta} \right) \leq \frac{1}{2^n}.$$
Now, by the Borel-Cantelli lemma
$$\frac{\left| X(t+\varepsilon_n) - X(t) \right|}{\varepsilon_n^{\frac{\tau(q)}{q}+\delta}} \to \infty \ \ a.s., \text{ as } n \to \infty.$$
Thus for arbitrary $\delta>0$ it holds that for every $t$, $H(t) \leq \frac{\tau(q)}{q}+\delta$ a.s. However this result does not allow us to say anything about the spectrum.
\end{remark}

Consider for the moment the FBM. The range of finite moments is $(-1,\infty)$ and $\tau(q)=Hq$ for $q \in (-1,\infty)$, so we have $\widetilde{H^+}=H+1$. Thus, the best we can say from Proposition \ref{prop2}, is that $d(h)=-\infty$ for $h > H+1$. However we know that $d(h)=-\infty$ for $h > H$. If the bound $\widetilde{H^+}$ could be considered over all negative order moments, we would get exactly the right endpoint of the support of the spectrum.

The fact that the bound derived in Proposition \ref{prop2} is not sharp enough for some examples points that negative order moments may not be the right paradigm to explain the spectrum. We therefore provide more general conditions that do not depend on the finiteness of moments. First of them is obvious from the proof of Theorem \ref{thm:ComplementKolmogorov-Chentsov}, Equation \eqref{condinproof}.

\begin{lemma}\label{lemma2}
Suppose that $\{X(t), t \in [0,T]\}$ is a stochastic process. Then almost every sample path of $\{X(t)\}$ is nowhere H\"older continuous of order $\gamma>0$ if for every $s\in [0,T]$ and $C>0$
\begin{equation*}
P \left( \left| X(s+t) - X(s) \right| \leq C t^{\gamma} \right) = O(t^{\eta}), \quad \text{ as } t \to 0.
\end{equation*}
with some $\eta>1$. If the increments are stationary it is enough to take $s=0$.
\end{lemma}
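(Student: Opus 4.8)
The plan is to repeat the proof of Theorem \ref{thm:ComplementKolmogorov-Chentsov} almost verbatim, since that argument already isolates the single probabilistic estimate it needs, namely Equation \eqref{condinproof}. Assume $T=1$. For the prescribed $\gamma$ and for $j,k\in\mathbb{N}$ introduce the same sets
\[
M_{jk} = \bigcup_{t\in[0,1]} \bigcap_{h\in[0,1/k]} \bigl\{ \omega : |X_{t+h}(\omega) - X_t(\omega)| \le j h^{\gamma} \bigr\},
\]
and note, exactly as in the cited proof, that a path can fail to be nowhere H\"older continuous of order $\gamma$ only if $\omega\in M_{jk}$ for some $j,k$; since there are countably many such sets, it suffices to prove $P(M_{jk})=0$ for each fixed pair.

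Next I would run the covering argument unchanged: for $n>2k$ and $\omega\in M_{jk}$ there is a point $t$ with $|X_{t+h}-X_t|\le j h^{\gamma}$ on $[0,1/k]$, and picking $i$ with $(i-1)/n\le t< i/n$ forces, via the triangle inequality and $2/n\le 1/k$, the bound $|X_{(i+1)/n}-X_{i/n}|\le 2j\,n^{-\gamma}$. Hence $M_{jk}\subset\bigcup_{i=1}^{n}A_i^{(n)}$ with $A_i^{(n)}=\{|X_{(i+1)/n}-X_{i/n}|\le 2j\,n^{-\gamma}\}$. The only place the original proof uses its hypothesis is in estimating $P(A_i^{(n)})$, where Chebyshev together with the negative-moment condition gives $P(A_i^{(n)})=O(n^{\gamma\alpha-1-\beta})$. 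Here I would replace that step by applying the assumption of the lemma directly with $s=i/n$, $C=2j$, $t=1/n$, obtaining $P(A_i^{(n)})=O(n^{-\eta})$, so that $P\bigl(\bigcup_{i=1}^{n}A_i^{(n)}\bigr)\le n\cdot O(n^{-\eta})=O(n^{1-\eta})\to 0$ because $\eta>1$. Taking $A=\bigcap_{n>2k}\bigcup_{i=1}^{n}A_i^{(n)}$ then gives $M_{jk}\subset A$ and $P(A)=0$, which completes the proof.

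The one point that needs care — and the step I would treat as the genuine obstacle — is the uniformity of the $O(\cdot)$ term in the base point $s$. In the sum $\sum_{i=1}^{n}P(A_i^{(n)})$ the point $s=i/n$ ranges over $n$ values while $t=1/n\to 0$, so the conclusion $\sum_{i=1}^{n}P(A_i^{(n)})=O(n^{1-\eta})$ requires the implied constant in $P(|X(s+t)-X(s)|\le C t^{\gamma})=O(t^{\eta})$ to be uniform over $s\in[0,T]$ for the fixed $C=2j$. This is automatic for processes with stationary increments, since then the probability depends on $t$ alone and one may set $s=0$ — which is precisely why the lemma singles out that case; in general it should be read as implicit in the hypothesis, just as the uniform constant $C$ is part of \eqref{kolmomcritnega}. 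Granting this, nothing further is required: the statement is a direct transcription of the proof of Theorem \ref{thm:ComplementKolmogorov-Chentsov} with its Equation \eqref{condinproof} taken as the assumption.
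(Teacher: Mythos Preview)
Your proposal is correct and matches the paper's approach exactly: the paper simply remarks that the lemma is obvious from the proof of Theorem~\ref{thm:ComplementKolmogorov-Chentsov}, since the hypothesis here is precisely the estimate used at Equation~\eqref{condinproof}. Your observation about uniformity in $s$ is a fair caveat, and the paper handles it the same way you do---implicitly, with the stationary-increments case singled out.
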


\bigskip

\begin{theorem}\label{thm3}
Let $\{X(t), t \in [0,T]\}$ be a stochastic process defined on some probability space $(\Omega,\mathcal{F}, P)$. Suppose that for some $\gamma>0$, $\eta>1$, $m \in \mathbb{N}$ it holds that for every $s\in [0,T]$ and $C>0$
\begin{equation}\label{thm3condition}
P \left( \max_{l=1,\dots,m} \left| X(s+lt) - X(s+(l-1)t) \right| \leq C t^{\gamma} \right) = O \left( t^{\eta} \right), \quad \text{as } t \to 0.
\end{equation}
Then, for $P$-a.e. $\omega\in \Omega$ the path $t \mapsto X_t(\omega)$ is nowhere H\"older continuous of order $\gamma$. In the stationary increments case it is enough to consider $s=0$.
\end{theorem}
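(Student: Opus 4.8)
The plan is to mimic the proof of Theorem~\ref{thm:ComplementKolmogorov-Chentsov}, the only new feature being that one now controls a whole block of $m$ consecutive increments simultaneously rather than a single increment -- which is exactly what hypothesis~\eqref{thm3condition} provides. Since $\gamma$ is fixed in the hypothesis, no reduction over a countable family of exponents is needed. Assume $T=1$. For $j,k\in\mathbb{N}$ introduce the same sets as in that proof,
\begin{equation*}
M_{jk} := \bigcup_{t\in[0,1]} \bigcap_{h\in[0,1/k]} \left\{ \omega\in\Omega : |X_{t+h}(\omega)-X_t(\omega)| \le j h^{\gamma} \right\},
\end{equation*}
and recall that if $\omega\notin M_{jk}$ for every $j,k$ then $t\mapsto X_t(\omega)$ is nowhere H\"older continuous of order $\gamma$. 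As the family $\{M_{jk}\}$ is countable, it suffices to fix $j,k$ and produce $A\in\mathcal{F}$ with $P(A)=0$ and $M_{jk}\subset A$.

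Fix $j,k$. For each integer $n>(m+1)k$ I would define the block events
\begin{equation*}
A_i^{(n)} := \left\{ \max_{l=1,\dots,m} \left| X_{\frac{i+l}{n}} - X_{\frac{i+l-1}{n}} \right| \le 2j(m+1)^{\gamma} n^{-\gamma} \right\}, \qquad i=1,\dots,n,
\end{equation*}
and prove the covering $M_{jk}\subset\bigcup_{i=1}^n A_i^{(n)}$. Given $\omega\in M_{jk}$, take a H\"older point $t\in[0,1]$ and then $i$ with $\frac{i-1}{n}\le t<\frac{i}{n}$. For each $l\in\{0,1,\dots,m\}$ one has $0\le\frac{i+l}{n}-t\le\frac{m+1}{n}<\frac1k$, so the defining bound of $M_{jk}$ applies at $t$ with $h=\frac{i+l}{n}-t$ and yields $|X_{\frac{i+l}{n}}(\omega)-X_t(\omega)|\le j(m+1)^{\gamma}n^{-\gamma}$; a triangle inequality then bounds each of the $m$ increments $X_{\frac{i+l}{n}}-X_{\frac{i+l-1}{n}}$, $l=1,\dots,m$, by $2j(m+1)^{\gamma}n^{-\gamma}$, i.e.\ $\omega\in A_i^{(n)}$.

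Next I would bound $P(A_i^{(n)})$ by applying \eqref{thm3condition} with base point $s=\frac{i}{n}$, with $\frac1n$ in the role of $t$, and with $C=2j(m+1)^{\gamma}$, getting $P(A_i^{(n)})=O(n^{-\eta})$; under stationary increments this estimate is the same for all $i$ (it is the probability at $s=0$), and in general one invokes uniformity of the $O(\cdot)$ over $s\in[0,T]$. A union bound then gives
\begin{equation*}
P\!\left(\bigcup_{i=1}^n A_i^{(n)}\right)\le\sum_{i=1}^n P(A_i^{(n)})=O\!\left(n\cdot n^{-\eta}\right)=O\!\left(n^{1-\eta}\right),
\end{equation*}
which tends to $0$ as $n\to\infty$ because $\eta>1$. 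Hence $A:=\bigcap_{n>(m+1)k}\bigcup_{i=1}^n A_i^{(n)}$ satisfies $A\in\mathcal{F}$, $M_{jk}\subset A$ and $P(A)=0$, and taking the countable union over all pairs $(j,k)$ produces the exceptional null set. The stationary-increments assertion is transparent here, since then the distribution of the block occurring in $A_i^{(n)}$ does not depend on $i$.

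The routine ingredients are the triangle-inequality estimate and the union bound. The step I expect to need genuine care is the covering $M_{jk}\subset\bigcup_i A_i^{(n)}$: the key observation is that any H\"older point $t$ automatically lies within $\frac{m+1}{n}$ of an entire block of $m+1$ consecutive grid points, so that one instance of the H\"older inequality at $t$ simultaneously controls all $m$ increments of that block; the minor boundary bookkeeping near $t=1$ and the uniformity of the $O(\cdot)$ in $s$ (free under stationary increments, the case of principal interest) round out this step.
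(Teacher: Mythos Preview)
Your proposal is correct and follows essentially the same route as the paper's proof: the same sets $M_{jk}$, the same choice $n>(m+1)k$, the same covering $M_{jk}\subset\bigcup_i A_i^{(n)}$ via the triangle inequality with constant $2j(m+1)^{\gamma}n^{-\gamma}$, and the same union bound giving $O(n^{1-\eta})$. Your remark that the general (non-stationary) case tacitly requires the $O(t^{\eta})$ in \eqref{thm3condition} to be uniform in $s$ is a fair observation that the paper leaves implicit.
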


\begin{proof}
The first part of the proof goes exactly as in the proof of Theorem \ref{thm:ComplementKolmogorov-Chentsov}. Fix $j,k \in \mathbb{N}$ and take $n \in \mathbb{N}$ such that
\begin{equation*}
n > (m+1) k.
\end{equation*}
If $\omega \in M_{jk}$, then there is some $t \in [0,1]$ and $i \in \{1,\dots,n\}$  such that \eqref{doktm2pom1} and \eqref{doktm2pom2} hold. Choice of $n$ ensures that for $l \in \{1,\dots,m\}$
\begin{equation*}\label{}
0< \frac{i+l-1}{n}-t < \frac{i+l}{n}-t < \frac{i-l}{n}-t + \frac{l+1}{n} \leq \frac{l+1}{n} \leq \frac{1}{k}.
\end{equation*}
It follows from \eqref{doktm2pom1} that for each $l \in \{1,\dots,m\}$
\begin{equation*}
|X_{\frac{i+l}{n}}(\omega)-X_{\frac{i+l-1}{n}}(\omega) | \leq j \left( \frac{l+1}{n} \right)^{\gamma} + j \left( \frac{l}{n} \right)^{\gamma} \leq 2 j \left( \frac{m+1}{n} \right)^{\gamma}.
\end{equation*}
Denote
\begin{align*}
A_{i,l}^{(n)} &=\left\{ |X_{\frac{i+l}{n}}-X_{\frac{i+l-1}{n}} | \leq 2 j \left( \frac{m+1}{n} \right)^{\gamma} \right\},\\
A_{i}^{(n)} &= \bigcap_{l=1}^m A_{i,l}^{(n)}.
\end{align*}
It then follows that
$$M_{jk} \subset \bigcup_{i=1}^n A_i^{(n)}.$$
From the assumption we have
\begin{align*}
P(A_i^{(n)}) &= P \left( \max_{l=1,\dots,m} |X_{\frac{i+l}{n}}-X_{\frac{i+l-1}{n}} | \leq 2 j (m+1)^{\gamma} \left( \frac{1}{n} \right)^{\gamma} \right) \leq C n^{-\eta},\\
P \left(\bigcup_{i=1}^n A_i^{(n)} \right) &\leq \sum_{i=1}^n P(A_i^{(n)}) \leq C_1 n^{-(\eta - 1)}.
\end{align*}
Now setting
\begin{equation*}
A = \bigcap_{n > k} \bigcup_{i=1}^n A_i^{(n)}  \in \mathcal{F},
\end{equation*}
it follows that $P(A)=0$, since $\eta>1$.
\end{proof}

Theorem \ref{thm3} enables one to avoid using moments in deriving the bound. As an example, we consider how Theorem \ref{thm3} can be applied in the simple case when $\{X(t)\}$ is the BM. Since $\{X(t)\}$ is $1/2$-sssi we have
\begin{equation*}
P \left( \max_{l=1,\dots,m} \left| X(lt) - X((l-1)t) \right| \leq C t^{\gamma} \right) = P \left( \max_{l=1,\dots,m} \left| X(l) - X(l-1) \right| \leq C t^{\gamma-1/2} \right).
\end{equation*}
Due to independent increments, then
\begin{equation*}
P \left( \max_{l=1,\dots,m} \left| X(l) - X(l-1) \right| \leq C t^{\gamma-1/2} \right) \leq C_1 t^{m(\gamma-1/2)},
\end{equation*}
This holds for every $\gamma>1/2$ and $m \in \mathbb{N}$ and by taking $m>1/(\gamma-1/2)$ we conclude $d(h)=-\infty$ for $h>1/2$.

Before we proceed on applying these results, we state the following simple corollary that expresses the criterion \eqref{thm3condition} in terms of negative order moments, but now moments of the maximum of increments. This is a generalization of Theorem \ref{thm:ComplementKolmogorov-Chentsov} that enables bypassing infinite negative order moments under very general conditions. From this criterion we derive in the next subsection, strong statements about the $H$-sssi processes.

\begin{corollary}\label{newComplementKC}
Suppose that a process $\{X(t), t \in [0,T]\}$ defined on some probability space $(\Omega,\mathcal{F}, P)$ satisfies
\begin{equation}\label{thmmomentcond}
E \left[  \max_{l=1,\dots,m} \left| X(s + lt) - X(s + (l-1)t) \right| \right]^{\alpha} \leq C t^{1+\beta},
\end{equation}
for all $t,s \in [0,T]$ and for some $\alpha<0$, $\beta<0$, $m \in \mathbb{N}$ and $C>0$. Then, for $P$-a.e. $\omega\in \Omega$ it holds that for each $\gamma > \beta/\alpha$ the path $t \mapsto X_t(\omega)$ is nowhere H\"older continuous of order $\gamma$.
\end{corollary}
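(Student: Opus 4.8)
The plan is to deduce the corollary directly from Theorem \ref{thm3}; all that is needed is to convert the negative-order moment bound \eqref{thmmomentcond} into the probability estimate \eqref{thm3condition} with a suitable exponent $\eta>1$. Write $Y_t := \max_{l=1,\dots,m} \left| X(s+lt) - X(s+(l-1)t) \right|$ to lighten notation, and observe that \eqref{thmmomentcond} forces $P(Y_t = 0) = 0$ (otherwise $Y_t^{\alpha}$ would be $+\infty$ on a set of positive measure, making $E[Y_t^{\alpha}]$ infinite), so $Y_t > 0$ almost surely and the power manipulations below are valid.

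First I would fix an arbitrary $\gamma > \beta/\alpha$ and an arbitrary constant $C_0 > 0$. Since $\alpha < 0$, the map $x \mapsto x^{\alpha}$ is decreasing on $(0,\infty)$, so the event $\{Y_t \leq C_0 t^{\gamma}\}$ coincides with $\{Y_t^{\alpha} \geq C_0^{\alpha} t^{\alpha\gamma}\}$. Applying Markov's inequality to the nonnegative random variable $Y_t^{\alpha}$ and then invoking the hypothesis \eqref{thmmomentcond},
\begin{equation*}
P\left( Y_t \leq C_0 t^{\gamma} \right) \;\leq\; \frac{E\left[ Y_t^{\alpha} \right]}{C_0^{\alpha} t^{\alpha\gamma}} \;\leq\; \frac{C t^{1+\beta}}{C_0^{\alpha} t^{\alpha\gamma}} \;=\; C\, C_0^{-\alpha}\, t^{\,1+\beta-\alpha\gamma},
\end{equation*}
and the implied constant $C C_0^{-\alpha}$ does not depend on $s$ precisely because the constant $C$ in \eqref{thmmomentcond} does not (this uniformity in $s$ is what Theorem \ref{thm3} actually uses, since in its proof the base point $i/n$ varies with $i$). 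Setting $\eta := 1 + \beta - \alpha\gamma$, we have $\eta > 1$ if and only if $\beta - \alpha\gamma > 0$; dividing by $\alpha < 0$ reverses the inequality to $\beta/\alpha < \gamma$, which is exactly how $\gamma$ was chosen. Hence \eqref{thm3condition} holds with this $\eta>1$, and Theorem \ref{thm3} yields that for $P$-a.e.\ $\omega$ the path $t \mapsto X_t(\omega)$ is nowhere H\"older continuous of order $\gamma$.

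Finally, to obtain the statement in the quantified form ``for $P$-a.e.\ $\omega$, for \emph{every} $\gamma > \beta/\alpha$'', I would take the union of the (countably many) exceptional null sets over all rational $\gamma \in (\beta/\alpha, \infty)$, exactly as in the opening paragraph of the proof of Theorem \ref{thm:ComplementKolmogorov-Chentsov}: off this single null set every rational order $\gamma > \beta/\alpha$ fails at every point, and an arbitrary real order $\gamma > \beta/\alpha$ then fails as well, because being nowhere H\"older continuous of some rational order $\gamma' \in (\beta/\alpha,\gamma)$ implies the same for $\gamma$. There is essentially no obstacle here; the only points requiring care are the bookkeeping of inequality directions under the negative power $\alpha$ and the observation that the $O$-constant in \eqref{thm3condition} must be uniform in $s$, which it is.
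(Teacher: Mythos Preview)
Your proposal is correct and follows essentially the same approach as the paper: apply Chebyshev's (Markov's) inequality for the negative power $\alpha$ to convert \eqref{thmmomentcond} into the probability estimate \eqref{thm3condition} with exponent $\eta = 1+\beta-\alpha\gamma > 1$, then invoke Theorem \ref{thm3}. Your additional remarks on $Y_t>0$ a.s., uniformity of the constant in $s$, and the countable-union argument over rational $\gamma$ are all correct elaborations that the paper leaves implicit (the last being handled by the reference to the opening paragraph of Theorem \ref{thm:ComplementKolmogorov-Chentsov}).
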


\begin{proof}
This follows directly from the Chebyshev's inequality for negative order moments and Theorem \ref{thm3}
\begin{align*}
&P \left( \max_{l=1,\dots,m} \left| X(s+lt) - X(s+(l-1)t) \right| \leq C t^{\gamma} \right)\\
&\quad \leq C^{-\alpha} t^{-\gamma \alpha} E \left[ \max_{l=1,\dots,m} \left| X(s+lt) - X(s+(l-1)t) \right| \right]^{\alpha} = O \left( t^{-\alpha \gamma+1+\beta} \right),
\end{align*}
since $1+\beta- \alpha \gamma >1$.
\end{proof}

\subsection{The case of the self-similar stationary increments processes}

In this subsection we refine our results for the case of the $H$-sssi processes by using Corollary \ref{newComplementKC}. These results can also be viewed in the light of the classical papers \cite{vervaat1985sample, takashima1989sample}. To be able to do this, we need to make sure that the moment in \eqref{thmmomentcond} can indeed be made finite by choosing $m$ large enough. We state this condition explicitly for reference.

\begin{cond}\label{C1}
Suppose $\{X(t),t\geq 0 \}$ is a stationary increments process. For every $\alpha<0$ there is $m_0 \in \mathbb{N}$ such that
$$E \left[ \max_{l=1,\dots,m_0} \left| X(l) - X(l-1) \right| \right]^{\alpha} < \infty.$$
\end{cond}

One way of assessing the Condition \ref{C1} is given in the following lemma which is weak enough to cover all the examples considered later. Recall the definition of the range of finite moments $\underline{q}$ and $\overline{q}$ given in \eqref{qLU}.

\begin{lemma}\label{lemma:condition1}
Suppose $\{X(t),t\geq 0 \}$ is stationary increments process which is ergodic in the sense that if $E |f(X_1)| < \infty$ for some measurable $f$, then
$$\frac{\sum_{l=1}^m f(X_l-X_{l-1}) }{m} \overset{a.s.}{\to} E f(X_1), \ \text{ as } m \to \infty.$$
Suppose also that $\underline{q}<0$. Then Condition \ref{C1} holds.
\end{lemma}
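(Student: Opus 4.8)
The goal is to show that for every $\alpha < 0$ there is $m_0$ with $E\left[\max_{l=1,\dots,m_0}|X_l - X_{l-1}|\right]^{\alpha} < \infty$, i.e. that the negative moment of the maximum of $m_0$ increments is finite even though the increments themselves may have a divergent negative moment. The key point is that taking a maximum of many copies pushes the small values away from zero, and ergodicity is exactly what guarantees this. My plan is as follows.

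First I would fix $\alpha < 0$. Since $\underline{q} < 0$, there is some $q_0 \in (\underline{q}, 0)$ with $E|X_1|^{q_0} < \infty$; in particular $P(|X_1| \le \varepsilon) = P(|X_1|^{q_0} \ge \varepsilon^{q_0}) \le \varepsilon^{-q_0} E|X_1|^{q_0}$ by Chebyshev, so $P(|X_1| \le \varepsilon) \le c\,\varepsilon^{|q_0|}$ for small $\varepsilon$, where $c = E|X_1|^{q_0}$ and $|q_0| = -q_0 > 0$. This gives a polynomial-in-$\varepsilon$ bound on the probability that a single increment is tiny. Next I want to turn this into a bound on $P(\max_{l=1,\dots,m}|X_l - X_{l-1}| \le \varepsilon)$. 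If the increments were independent this would immediately be $\le (c\varepsilon^{|q_0|})^m$, but stationary increments need not be independent, so this is where ergodicity enters. Apply the ergodic hypothesis to the bounded measurable function $f(x) = \mathbf{1}_{\{|x| \le \varepsilon\}}$: since $E|f(X_1)| = P(|X_1| \le \varepsilon) < \infty$ trivially, we get $\frac{1}{m}\sum_{l=1}^m \mathbf{1}_{\{|X_l - X_{l-1}| \le \varepsilon\}} \to P(|X_1| \le \varepsilon) \le c\varepsilon^{|q_0|}$ a.s. Hence for $\varepsilon$ small enough that $c\varepsilon^{|q_0|} < 1/2$, say, the event $\{\max_{l=1,\dots,m}|X_l - X_{l-1}| \le \varepsilon\}$ — which forces the above average to equal $1$ — has probability tending to $0$; more carefully, $P(\max_{l=1,\dots,m}|X_l - X_{l-1}| \le \varepsilon) \le P\big(\frac{1}{m}\sum_{l=1}^m \mathbf{1}_{\{|X_l - X_{l-1}| \le \varepsilon\}} = 1\big)$, and since the limit of these averages is a.s. at most $c\varepsilon^{|q_0|} < 1$, this probability $\to 0$ as $m \to \infty$ for each fixed such $\varepsilon$.

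The subtlety — and what I expect to be the main obstacle — is that I need a bound on $P(\max_{l=1,\dots,m_0}|X_l - X_{l-1}| \le \varepsilon)$ that is uniform enough in $\varepsilon$ to integrate against $\varepsilon^{\alpha - 1}\,d\varepsilon$ near $0$ (recall $E Y^{\alpha} = |\alpha|\int_0^\infty \varepsilon^{\alpha - 1} P(Y \le \varepsilon)\,d\varepsilon$ for $Y \ge 0$, $\alpha < 0$, up to the behaviour at $\infty$ which is harmless since $\alpha<0$). A clean way to get a single $m_0$ that works for all small $\varepsilon$ simultaneously is: pick an integer $k$ with $k|q_0| > |\alpha|$, and aim to show $P(\max_{l=1,\dots,m_0}|X_l - X_{l-1}| \le \varepsilon) \le C\varepsilon^{k|q_0|}$ for all $\varepsilon \in (0,1)$ and some finite $C$, with $m_0$ depending only on $k$ (hence only on $\alpha$ and $q_0$). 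One route: use a maximal-type / stationarity argument to dominate $\max_{l=1,\dots,m}$ over a long block by a product of $k$ "sub-block" events and apply Fatou/dominated convergence together with the ergodic averaging above to extract, for each of $k$ disjoint sub-blocks, a factor that is eventually $< 1$; choosing each sub-block long enough and multiplying gives the $\varepsilon^{k|q_0|}$-type decay. Since stationarity lets us place these sub-blocks anywhere, and ergodicity controls the frequency of small increments inside each, the product bound follows. Then $E\left[\max_{l=1,\dots,m_0}|X_l - X_{l-1}|\right]^{\alpha} \le |\alpha|\int_0^1 \varepsilon^{\alpha-1}\,C\varepsilon^{k|q_0|}\,d\varepsilon + (\text{finite tail}) < \infty$ because $\alpha - 1 + k|q_0| > -1$, which is exactly Condition \ref{C1}.

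I would present the argument in the order: (1) derive the single-increment smallness bound $P(|X_1|\le\varepsilon) \le c\varepsilon^{|q_0|}$ from $\underline q<0$ and Chebyshev; (2) invoke ergodicity with $f = \mathbf{1}_{\{|\cdot|\le\varepsilon\}}$ to control the density of small increments; (3) combine over $k$ long sub-blocks (using stationarity of increments to shift) to obtain $P(\max_{l\le m_0}|X_l-X_{l-1}|\le\varepsilon)\le C\varepsilon^{k|q_0|}$ with $m_0$ fixed once $k>|\alpha|/|q_0|$ is chosen; (4) integrate via the layer-cake formula for negative moments to conclude finiteness, hence Condition \ref{C1}. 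The heart of the matter is step (3): passing from "the average fraction of small increments is $<1$" to "the probability that *all* of $m_0$ increments are small decays like a genuine power of $\varepsilon$", uniformly in $\varepsilon$; everything else is routine.
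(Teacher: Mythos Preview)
Your plan diverges substantially from the paper's and has a genuine gap at step~(3). You aim for a uniform bound $P\bigl(\max_{l\le m_0}|X_l-X_{l-1}|\le\varepsilon\bigr)\le C\varepsilon^{k|q_0|}$ by ``multiplying'' over $k$ disjoint sub-blocks, but stationarity and ergodicity alone give no multiplicative decoupling of events in disjoint time blocks. The ergodic theorem controls long-run \emph{averages}, not joint probabilities; a product-type estimate $P(A_1\cap\cdots\cap A_k)\le C\prod_j P(A_j)$ requires independence, negative association, or some quantitative mixing condition, none of which follows from the ergodic hypothesis as stated in the lemma. Your step~(2) does yield, for each fixed small $\varepsilon$, that $P\bigl(\max_{l\le m}|X_l-X_{l-1}|\le\varepsilon\bigr)\to 0$ as $m\to\infty$, but with no rate in $m$ and no uniformity in $\varepsilon$---far too weak to feed into the layer-cake integral of step~(4). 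You correctly flag~(3) as ``the heart of the matter'', but the sub-block route cannot be carried through with the stated tools.

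The paper bypasses this difficulty entirely by applying the ergodic hypothesis not to an indicator $\mathbf 1_{\{|\cdot|\le\varepsilon\}}$ but to $f(x)=|x|^{r}$ for a fixed $r\in(\underline q,0)$, and then using the elementary pointwise inequality
\[
\Bigl[\max_{l\le m}|X_l-X_{l-1}|\Bigr]^{r}=\min_{l\le m}|X_l-X_{l-1}|^{r}\le \frac{1}{m}\sum_{l=1}^{m}|X_l-X_{l-1}|^{r}.
\]
Letting $m\to\infty$ on the right (where the ergodic limit is the constant $M:=E|X_1|^{r}$) and raising to the positive power $\alpha/r$ gives an almost-sure bound $\bigl[\sup_{l}|X_l-X_{l-1}|\bigr]^{\alpha}\le M^{\alpha/r}$, from which the finite expectation for a suitable $m_0$ is read off. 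The idea you are missing is to compare with the \emph{ergodic average of $|X_l-X_{l-1}|^{r}$} rather than to count how many increments are small; this converts the problem into a pointwise inequality and eliminates any need for a product bound across blocks.
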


\begin{proof}
Let $r<0$ be such that $E|X(1)-X(0)|^{r} <\infty$. Then
\begin{align*}
\inf_{l\in \mathbb{N}} |X(l) - X(l-1)|^r &= \lim_{m\to \infty} \min_{l=1,\dots,m} |X(l) - X(l-1)|^r \\
& \leq \lim_{m\to \infty} \frac{\sum_{l=1}^m |X(l) - X(l-1)|^r }{m} = E |X(1)-X(0)|^r=:M, \ \text{ a.s.}
\end{align*}
So,
$$\inf_{l\in \mathbb{N}} |X(l) - X(l-1)|^{\alpha} = \left( \inf_{l\in \mathbb{N}} |X(l) - X(l-1)|^{r}\right)^{\frac{\alpha}{r}} = \left( \inf_{l\in \mathbb{N}} |X(l) - X(l-1)|^r \right)^{\frac{\alpha}{r}} \leq M^{\frac{\alpha}{r}}, \ \text{ a.s.}$$
and $\inf_{l\in \mathbb{N}} |X(l) - X(l-1)|^{\alpha} $ is bounded and thus has finite expectation. Given $\alpha<0$ we can choose $m_0$ such that
\begin{align*}
&\left[ \max_{l=1,\dots,m_0} \left| X(l) - X(l-1) \right| \right]^{\alpha} = \left[  \frac{1}{\max_{l=1,\dots,m_0} \left| X(l) - X(l-1) \right|} \right]^{-\alpha}\\
&=\left[ \min_{l=1,\dots,m_0}  \frac{1}{\left| X(l) - X(l-1) \right|} \right]^{-\alpha} = \min_{l=1,\dots,m_0} |X(l) - X(l-1)|^{\alpha} \leq  M^{\frac{\alpha}{r}}, \ \text{ a.s.}
\end{align*}
which implies the statement.
\end{proof}

\begin{remark}
Two examples may provide insight of how far the assumptions of Lemma \ref{lemma:condition1} are from Condition \ref{C1}. If $X(t)=tX$ for some random variable $X$, then $\max_{l=1,\dots,m} \left| X(l) - X(l-1) \right|=X$ and thus Condition \ref{C1} depends on the range of finite moments of $X$. For the second example, suppose $X(l)-X(l-1)$ is an i.i.d. sequence such that $P(|X(1)-X(0)| \leq x) \sim 1/ \ln x$ as $x \to 0$. This implies, in particular, that $E|X(1)-X(0)|^r=\infty$ for any $r<0$. Moreover,
\begin{align*}
E \left[ \max_{l=1,\dots,m} \left| X(l) - X(l-1) \right| \right]^{\alpha} &= - \int_0^{\infty} \alpha y^{\alpha-1} P(\max_{l=1,\dots,m} \left| X(l) - X(l-1) \right| \leq y ) dy\\
&= - \int_0^{\infty} \alpha y^{\alpha-1} \frac{1}{(\ln y )^m} dy = \infty,
\end{align*}
for every $\alpha<0$ and $m \in \mathbb{N}$, thus Condition \ref{C1} does not hold.
\end{remark}

We are now ready to prove a general theorem about the $H$-sssi processes.

\begin{theorem}\label{thm:boundsHsssi}
Suppose $\{X(t), t \in [0,T]\}$ is $H$-sssi stochastic process such that Condition \ref{C1} holds and $H-1/\overline{q}\geq 0$. Then, for almost every sample path,
\begin{equation*}
H-\frac{1}{\overline{q}} \leq H(t) \leq H \quad \text{ for each } t \in [0,T].
\end{equation*}
Moreover, $d(H)=1$ a.s.
\end{theorem}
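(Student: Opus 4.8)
The plan is to establish the three assertions --- the lower bound $H-1/\overline{q}\le H(t)$, the upper bound $H(t)\le H$, and $d(H)=1$ --- by combining the earlier general criteria (Proposition~\ref{prop1}, Corollary~\ref{newComplementKC}, Theorem~\ref{thm3}) with the self-similarity of $\{X(t)\}$. For the \emph{lower bound}, I would note that $H$-sssi gives $E|X(t)-X(s)|^q = E|X(1)|^q\,|t-s|^{Hq}$ for every $q\in(0,\overline{q})$ with $E|X(1)|^q<\infty$, so whenever $Hq>1$ the Kolmogorov--Chentsov hypothesis \eqref{kolmomcrit} holds with $\beta = Hq-1$, yielding local H\"older continuity of every order $\gamma<H-1/q$. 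Letting $q\uparrow\overline{q}$ (using $H-1/\overline{q}\ge 0$, which guarantees the relevant $q$ actually lie in $(0,\overline q)$ and make $\tau(q)=Hq>1$) gives $H(t)\ge H-1/\overline{q}$ for all $t$, on a fixed modification. This is exactly Proposition~\ref{prop1} specialized to the sssi case with $H^- = H-1/\overline{q}$.

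For the \emph{upper bound} $H(t)\le H$ I would use Corollary~\ref{newComplementKC}. Fix $\gamma>H$. By $H$-self-similarity and stationary increments,
\[
E\left[\max_{l=1,\dots,m}\left|X(lt)-X((l-1)t)\right|\right]^{\alpha}
= t^{H\alpha}\,E\left[\max_{l=1,\dots,m}\left|X(l)-X(l-1)\right|\right]^{\alpha}
\]
for any $\alpha<0$. Condition~\ref{C1} lets me pick $m=m_0$ large enough that the constant $E[\max_{l\le m_0}|X(l)-X(l-1)|]^{\alpha}$ is finite, so \eqref{thmmomentcond} holds with $\beta = H\alpha-1<0$ (note $\beta<0$ since $H<1\le$ whatever, and in any case $H\alpha<0$). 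Corollary~\ref{newComplementKC} then says the paths are a.s.\ nowhere H\"older continuous of any order $>\beta/\alpha = H-1/\alpha$. Since $\alpha<0$ is arbitrary, $\beta/\alpha = H - 1/\alpha$ can be pushed down to $H$ by letting $\alpha\to-\infty$ (or more carefully, for each rational $\gamma>H$ choose $\alpha<0$ with $H-1/\alpha<\gamma$, apply the corollary, and take a countable union of null sets). Hence a.s.\ $H(t)\le H$ for every $t\in[0,T]$.

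For $d(H)=1$: the two bounds show the support of the spectrum is contained in $[H-1/\overline q,\,H]$, but I still need to pin down $d(H)$ itself. The inclusion $d(H)\le 1$ is automatic since $S_H\subset[0,T]$ has Hausdorff dimension at most $1$. For the reverse inequality $d(H)\ge 1$, the natural route is to show that the set $\{t: H(t)=H\}$ has full Lebesgue measure a.s.: by self-similarity and stationary increments the law of $H(t)$ does not depend on $t$, and the lower bound plus a matching ``$H(t)\le H$ a.s.\ for each fixed $t$'' (the weaker pointwise statement, which follows from the Borel--Cantelli argument in Remark~\ref{remark2} applied along a sequence $\varepsilon_n\to0$ using $E|X(\varepsilon_n)|^{\alpha}=E|X(1)|^{\alpha}\varepsilon_n^{H\alpha}$ for suitable $\alpha<0$ --- here I may need Condition~\ref{C1}-type finiteness) force $H(t)=H$ a.s.\ for each fixed $t$; Fubini then gives Lebesgue-a.e.\ $t$ satisfying $H(t)=H$ a.s., whence $\dim_H S_H = 1$ a.s. The main obstacle I anticipate is precisely this last point: getting $H(t)\le H$ \emph{for a fixed $t$} requires a finite negative moment of $X(1)$ or of a maximum of increments, and if all negative moments of $X(1)$ diverge one must invoke Condition~\ref{C1} and work with $\max_{l\le m}|X(l)-X(l-1)|$ rather than a single increment --- adapting the Borel--Cantelli step to the max is the delicate part. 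Everything else is a routine repackaging of the general theorems via the scaling identity.
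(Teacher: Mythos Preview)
Your arguments for the two uniform bounds $H-1/\overline{q}\le H(t)$ and $H(t)\le H$ follow the paper's proof essentially verbatim: Proposition~\ref{prop1} for the lower bound, and Corollary~\ref{newComplementKC} together with the self-similar scaling of the max-of-increments moment and Condition~\ref{C1} for the upper bound. Your remark that it suffices to treat rational $\gamma>H$ and take a countable union of null sets is exactly the device the paper invokes (it refers back to the opening paragraph of the proof of Theorem~\ref{thm:ComplementKolmogorov-Chentsov}).

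The gap is in the $d(H)=1$ step. You correctly aim to show that $\{t:H(t)=H\}$ has full Lebesgue measure and then apply Fubini, and you correctly isolate the pointwise \emph{upper} bound $H(t)\le H$ a.s.\ for each fixed $t$ via the Borel--Cantelli argument of Remark~\ref{remark2}. (Your caution about needing Condition~\ref{C1} here is well placed; the paper simply asserts $E|X(t)|^q<\infty$ for some $q<0$ at this point.) But you never supply the matching pointwise \emph{lower} bound $H(t)\ge H$ a.s.\ for each fixed $t$. What you call ``the lower bound'' is the uniform estimate $H(t)\ge H-1/\overline{q}$, and combining that with $H(t)\le H$ only traps $H(t)$ in the interval $[H-1/\overline{q},H]$, not at the single point $H$; the observation that the law of $H(t)$ does not depend on $t$ does not help either. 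The paper closes this by running the Remark~\ref{remark2} argument in the opposite direction with a \emph{positive} exponent $0<q<\overline{q}$: for any $\delta>0$ and $C>0$,
\[
P\bigl(|X(t+\varepsilon)-X(t)|\ge C\varepsilon^{H-\delta}\bigr)\le \frac{E|X(\varepsilon)|^q}{C^q\varepsilon^{(H-\delta)q}}=\frac{E|X(1)|^q}{C^q}\,\varepsilon^{\delta q}\to 0,
\]
and Borel--Cantelli along a suitable sequence then yields $H(t)\ge H$ a.s.\ for each fixed $t$. Without this half of the sandwich, your Fubini step has nothing to act on.
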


\begin{proof}
By the argument at the beginning of the proof of Theorem \ref{thm:ComplementKolmogorov-Chentsov} it is enough to take arbitrary $\gamma>H$. Given $\gamma$ we take $\alpha<1/(H-\gamma)<0$ which implies $\gamma > H -1/\alpha$. Due to Condition \ref{C1}, we can choose $m_0\in \mathbb{N}$ such that $E \left[  \max_{l=1,\dots,m_0} \left| X(lt) - X((l-1)t) \right| \right]^{\alpha} < \infty$. Self-similarity then implies that
\begin{equation*}
E \left[  \max_{l=1,\dots,m_0} \left| X(lt) - X((l-1)t) \right| \right]^{\alpha} = t^{H \alpha} E \left[  \max_{l=1,\dots,m_0} \left| X(l) - X(l-1) \right| \right]^{\alpha} = C t^{1+ (H \alpha-1)}.
\end{equation*}
The claim now follows immediately from Corollary \ref{newComplementKC} with $\beta=H \alpha - 1$ since $\gamma > \beta/\alpha$.

That $H(t) \geq H-1/\overline{q}$ follows from Proposition \ref{prop1}. Since $E|X(t)|^q<\infty$ for some $q<0$ it follows from Remark \ref{remark2} that for every $t\in [0,T]$, $H(t)\leq H$ a.s. On the other hand, taking $0<q<\overline{q}$ we can get that for $\delta>0$ and $C>0$
$$P \left( \left| X(t+\varepsilon) - X(t) \right| \geq C \varepsilon^{H-\delta} \right) \leq \frac{E \left| X(t+\varepsilon) - X(t) \right|^q }{C^q \varepsilon^{Hq-\delta q}} = \frac{c(q)}{C^q \varepsilon^{-\delta q}} \to 0,$$
as $\varepsilon \to 0$. The same arguments as in Remark \ref{remark2} imply that for every $t\in [0,T]$, $H(t)\geq H$ a.s. By Fubini's theorem it follows that a.s. for almost every $t\in [0,T]$ $H(t)=H$. Thus the Lebesgue measure of the set $S_H=\{ t : H(t)=H \}$ is $1$ and so $d(H)=1$.
\end{proof}

A simple consequence of the preceding is the following statement.

\begin{corollary}\label{cor:Hsssi}
Suppose that Condition \ref{C1} holds. A $H$-sssi process with all positive order moments finite must have trivial spectrum, i.e. $d(h)=-\infty$ for $h\neq H$.
\end{corollary}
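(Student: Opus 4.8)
The plan is to read this off directly from Theorem \ref{thm:boundsHsssi}. The phrase ``all positive order moments finite'' says precisely that $\overline{q}=\infty$ in the notation of \eqref{qLU}, so $1/\overline{q}=0$ and the hypothesis $H-1/\overline{q}\geq 0$ of Theorem \ref{thm:boundsHsssi} reduces to $H\geq 0$, which holds for every self-similar process under our standing assumptions. Since Condition \ref{C1} is assumed, Theorem \ref{thm:boundsHsssi} applies on each interval $[0,T]$ and yields, for almost every sample path,
\begin{equation*}
H-\frac{1}{\overline{q}}=H \leq H(t) \leq H \quad \text{for each } t\in[0,T],
\end{equation*}
that is, $H(t)=H$ identically on $[0,T]$.

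From this the conclusion is immediate: for any $h\neq H$ the level set $S_h=\{t\in[0,T]:H(t)=h\}$ is empty for almost every $\omega$, and by the convention $\dim_H(\emptyset)=-\infty$ we get $d(h)=-\infty$ a.s. To pass from $[0,T]$ to the whole half-line (if desired) one intersects the corresponding full-measure events over $T\in\mathbb{N}$. Since Theorem \ref{thm:boundsHsssi} also gives $d(H)=1$, the spectrum reduces to the single point $\{H\}$, i.e. it is trivial.

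There is essentially no obstacle here beyond what is already done: the substantive content --- controlling the small-increment probabilities through Corollary \ref{newComplementKC}, and verifying via Condition \ref{C1} that the negative order moment of the maximum of increments can be made finite --- is carried out in Theorem \ref{thm:boundsHsssi} and Lemma \ref{lemma:condition1}. The only point worth stating explicitly is the observation that finiteness of all positive order moments forces the lower and upper H\"older bounds to coincide at $H$, which is exactly the mechanism by which infinite positive-order moments are identified as the true source of a nontrivial spectrum.
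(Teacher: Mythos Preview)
Your proof is correct and is exactly the intended argument: the paper presents this corollary as ``a simple consequence of the preceding'' Theorem \ref{thm:boundsHsssi}, and you have spelled out precisely that consequence---namely that $\overline{q}=\infty$ collapses the interval $[H-1/\overline{q},H]$ to the single point $H$, forcing $H(t)\equiv H$ and hence $S_h=\emptyset$ for $h\neq H$.
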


This applies to FBM, but also to all Hermite processes, like e.g. Rosenblatt process (see Section \ref{sec4}). Thus, under very general conditions a self-similar stationary increments process with a nontrivial spectrum must be heavy-tailed. This shows clearly how infinite moments can affect path properties when scaling holds. The following simple result shows this more precisely.

\begin{prop}
Suppose $\{X(t)\}$ is $H$-sssi. If $\gamma<H$ and $d(\gamma)\neq - \infty$, then $E|X(1)|^{q}=\infty$ for $q>1/(H-\gamma)$.
\end{prop}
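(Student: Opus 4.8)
This statement is exactly the contrapositive of the $H$-sssi case of Proposition \ref{prop1}, so the plan is to argue by contraposition. First I would record the routine fact that an $H$-sssi process is multifractal in the sense of Definition \ref{defM} with scaling function $\tau(q)=Hq$ and $c(q)=E|X(1)|^q$ on $\mathfrak{Q}=(\underline{q},\overline{q})$, because stationarity of the increments and self-similarity give $E|X(t)-X(s)|^q=E|X(t-s)|^q=|t-s|^{Hq}\,E|X(1)|^q$. I would also note that we may assume $0\le\gamma<H$: for $\gamma<0$ the level set $S_\gamma$ is forced to be empty for any continuous path, so the hypothesis $d(\gamma)\neq-\infty$ is vacuous there.

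Next, fix $q>1/(H-\gamma)$ and suppose, toward a contradiction, that $E|X(1)|^q<\infty$. From $q>1/(H-\gamma)\ge 1/H$ we get $\tau(q)=Hq>1$, and by self-similarity $E|X(T)|^q=T^{Hq}\,E|X(1)|^q<\infty$, so all the hypotheses of Proposition \ref{prop1} are met with this particular $q$. That proposition then supplies a modification of $\{X(t)\}$ which is, almost surely, locally H\"older continuous of exponent $\gamma'$ for every $\gamma'\in\left(0,\frac{\tau(q)}{q}-\frac{1}{q}\right)=\left(0,H-\frac{1}{q}\right)$. The inequality $q>1/(H-\gamma)$ is equivalent to $\gamma<H-1/q$, so I can pick $\gamma'$ with $\gamma<\gamma'<H-1/q$. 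For this modification, almost every sample path satisfies $H(t)\ge\gamma'>\gamma$ for each $t\in[0,T]$ (this is the ``in particular'' conclusion of Proposition \ref{prop1}), whence $S_\gamma=\{t:H(t)=\gamma\}=\emptyset$ almost surely and $d(\gamma)=\dim_H(\emptyset)=-\infty$.

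Finally, since throughout the paper the spectrum of such a process is understood to be computed on the modification furnished by Proposition \ref{prop1}, the previous step gives $d(\gamma)=-\infty$, contradicting the assumed $d(\gamma)\neq-\infty$. Hence $E|X(1)|^q=\infty$ for every $q>1/(H-\gamma)$, as claimed. I do not expect a genuine obstacle here; the only points requiring care are checking that $\tau(q)=Hq>1$ (so that Proposition \ref{prop1}, rather than merely Lemma \ref{lemma:kolconditions}, is applicable) and keeping the bookkeeping straight so that the $d(\gamma)$ appearing in the statement is read against the same modification on which the H\"older lower bound of Proposition \ref{prop1} holds.
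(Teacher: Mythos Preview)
Your proof is correct and takes essentially the same contrapositive route as the paper: assume $E|X(1)|^q<\infty$ for some $q>1/(H-\gamma)$ and use the Kolmogorov--Chentsov machinery to force $H(t)>\gamma$ for every $t$, whence $d(\gamma)=-\infty$. The only cosmetic difference is that the paper writes out the Chebyshev bound $P(|X(t)|\ge Ct^{\gamma})=O(t^{1+\varepsilon(H-\gamma)})$ and invokes Lemma~\ref{lemma:kolconditions}(i) directly, whereas you route through the already-packaged Proposition~\ref{prop1}; this is why you must separately check $\tau(q)=Hq>1$ and add the side remark on $\gamma<0$, a case the paper's probabilistic formulation handles uniformly without that split.
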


\begin{proof}
Suppose $E|X(t)|^{q}<\infty$ for $q>1/(H-\gamma)$. Then for $\varepsilon>0$ we can apply Chebyshev's inequality to get
\begin{equation*}
P \left( |X(t)| \geq C t^{\gamma} \right) = P \left( |X(1)| \geq C t^{\gamma-H} \right) \leq \frac{E \left|X(1)\right|^{\frac{1}{H-\gamma}+\varepsilon} }{t^{-1-\varepsilon(H-\gamma)}}=O(t^{1+\varepsilon(H-\gamma)}).
\end{equation*}
By Theorem \ref{thm:Kolmogorov-Chentsov} and Lemma \ref{lemma:kolconditions} this implies $d(\gamma)=-\infty$, which is a contradiction.
\end{proof}

\subsection{The case of the multifractal processes}\label{subsec3.4}
Our next goal is to show that in the definition of $\widetilde{H^+}$ one can essentially take the infimum over all $q<0$. At the moment this makes no sense as $\tau$ from Definition \ref{defM} may not be defined in this range. It is therefore necessary to redefine the meaning of the scaling function. We therefore work with the more general Definition \ref{defD}.

In the next section we will see on the example of the log-normal cascade process that when the multifractal process has all negative order moments finite, the bound derived in Proposition \ref{prop2} is sharp. In general this would not be the case for any multifractal in the sense of Definition \ref{defD}. Take for example a multifractal random walk (MRW), which is a compound process $X(t)=B(\theta(t))$ where $B$ is BM and $\theta$ is the independent cascade process, say log-normal cascade (see \cite{bacry2003}). By the multifractality of the cascade for $t<1$, $\theta(t)=^d M(t) \theta(1)$ and multifractality of MRW implies $X(t)=^d (M(t) \theta(1))^{1/2} B(1)$. Now by independence of $B$ and $\theta$, if $E|B(1)|^q=\infty$ then $E|X(t)|^q=\infty$. Since $B(1)$ is Gaussian, moments will be infinite for $q \leq -1$.

We thus provide a more general bound which only has a restriction on the moments of the random factor from the Definition \ref{defD}. Therefore, if the process satisfies Definition \ref{defD} and if the random factor $M$ is multifractal by Definition \ref{defM} with scaling function $\tau$, we define
\begin{equation*}
H^+ = \min \left\{ \frac{\tau(q)}{q} - \frac{1}{q} : q < 0 \  \& \ E|M(t)|^q<\infty \right\}.
\end{equation*}

\begin{corollary}\label{cor:upperboundMF}
Suppose $\{X(t), t \in [0,T]\}$ is defined on some probability space $(\Omega,\mathcal{F}, P)$, has stationary increments and Condition \ref{C1} holds. Suppose also it is multifractal by Definition \ref{defD} and the random factor $M$ satisfies Definition \ref{defM} with the scaling function $\tau(q)$. If $E|M(T)|^{q} < \infty$ for $q<0$, then for $P$-a.e. $\omega\in \Omega$ it holds that for each
$$\gamma > \frac{\tau(q)}{q}-\frac{1}{q}$$
the path $t \mapsto X_t(\omega)$ is nowhere H\"older continuous of order $\gamma$.\\
In particular, for almost every sample path,
\begin{equation*}
H(t) \leq H^+ \quad \text{ for each } t \in [0,T].
\end{equation*}
\end{corollary}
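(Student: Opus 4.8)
The plan is to reduce Corollary \ref{cor:upperboundMF} to Corollary \ref{newComplementKC} by verifying the moment condition \eqref{thmmomentcond} for the increments of $\{X(t)\}$. First I would fix $q<0$ with $E|M(T)|^q<\infty$ and set $\gamma_0 = \tau(q)/q - 1/q$; by the same countable-union argument used at the start of the proof of Theorem \ref{thm:ComplementKolmogorov-Chentsov}, it suffices to fix an arbitrary $\gamma>\gamma_0$ and show the path is a.s. nowhere H\"older continuous of order $\gamma$. The key observation is that Definition \ref{defD} gives, for $c>0$, the distributional identity $\{X(ct)\}\overset{d}{=}\{M(c)X(t)\}$ with $M(c)$ independent of $\{X(t)\}$, and since the increments are stationary, the increment vector $\big(X(t)-X(0),\dots,X(mt)-X((m-1)t)\big)$ has the same law as $M(t)\cdot\big(X(1)-X(0),\dots,X(m)-X(m-1)\big)$ with the factor $M(t)$ independent of the unit-lag increment vector. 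Consequently
\begin{equation*}
\max_{l=1,\dots,m}\big|X(lt)-X((l-1)t)\big| \overset{d}{=} |M(t)|\cdot \max_{l=1,\dots,m}\big|X(l)-X(l-1)\big|.
\end{equation*}

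Next I would take the $\alpha$-th moment for a suitable $\alpha<0$. Choose $\alpha<0$ small enough that $\alpha<1/(\,\tau(q)/q-1/q-\gamma\,)$, i.e. $\gamma > \tau(q)/q - 1/q - 1/\alpha$, which is possible since $\gamma>\gamma_0$; shrinking $\alpha$ further if needed, we may also demand $\alpha\geq q$ so that $E|M(t)|^\alpha<\infty$ (using that $|M(t)|^\alpha \le 1 + |M(t)|^q$ when $\alpha \ge q$ for the part near $0$, together with boundedness of lower-order negative moments), and, invoking Condition \ref{C1}, pick $m_0$ with $E\big[\max_{l=1,\dots,m_0}|X(l)-X(l-1)|\big]^\alpha<\infty$. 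By independence of the two factors,
\begin{equation*}
E\Big[\max_{l=1,\dots,m_0}\big|X(lt)-X((l-1)t)\big|\Big]^{\alpha}
= E|M(t)|^{\alpha}\; E\Big[\max_{l=1,\dots,m_0}\big|X(l)-X(l-1)\big|\Big]^{\alpha}.
\end{equation*}
Since $M$ satisfies Definition \ref{defM} with scaling function $\tau$, we have $E|M(t)|^{\alpha} = c_M(\alpha)\, t^{\tau(\alpha)}$, so the right-hand side equals $C\, t^{\tau(\alpha)} = C\, t^{1+(\tau(\alpha)-1)}$. Applying Corollary \ref{newComplementKC} with this $\alpha$ and $\beta=\tau(\alpha)-1$, the path is a.s. nowhere H\"older continuous of order $\gamma$ for every $\gamma>\beta/\alpha = \tau(\alpha)/\alpha - 1/\alpha$; taking the infimum over admissible $\alpha<0$ (equivalently, choosing $\alpha$ as close as allowed to the relevant value) and then over all $q<0$ with $E|M(T)|^q<\infty$ yields the stated bound $H(t)\le H^+$ for each $t$, a.s.

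The main obstacle is not the probabilistic machinery — that is handled entirely by Corollary \ref{newComplementKC} — but rather the bookkeeping around which negative exponents are admissible: one must simultaneously keep $E|M(t)|^\alpha$ finite (controlled by the hypothesis $E|M(T)|^q<\infty$ and monotonicity of negative moments of $M$ near the origin), keep $E\big[\max_{l\le m_0}|X(l)-X(l-1)|\big]^\alpha$ finite (controlled by Condition \ref{C1}), and still have $\alpha$ negative enough that $\beta/\alpha = \tau(\alpha)/\alpha-1/\alpha$ can be pushed below any prescribed $\gamma>\tau(q)/q-1/q$. Verifying that the distributional identity for the max of increments genuinely follows from Definition \ref{defD} plus stationarity of increments (rather than just for a single increment) requires a brief argument using that $M(c)$ is independent of the whole process $\{X(t)\}$, hence of the finite-dimensional increment vector; I would state this explicitly as the one nontrivial step.
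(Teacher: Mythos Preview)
Your overall strategy is the same as the paper's: factor the negative-order moment of the maximum of increments via the multifractal scaling identity, then apply Corollary~\ref{newComplementKC}. The distributional identity you isolate,
\[
\max_{l=1,\dots,m}\big|X(lt)-X((l-1)t)\big| \overset{d}{=} |M(t)|\cdot \max_{l=1,\dots,m}\big|X(l)-X(l-1)\big|,
\]
together with independence of $M(t)$ from the increment vector, is exactly the step the paper uses.

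Where you diverge from the paper, and where your proposal becomes muddled, is the introduction of an auxiliary exponent $\alpha$ distinct from $q$. This is unnecessary and creates a genuine mismatch: you choose $\alpha$ so that $\gamma > \tau(q)/q - 1/q - 1/\alpha$, but after applying Corollary~\ref{newComplementKC} with $\beta=\tau(\alpha)-1$ you obtain nowhere H\"older continuity only for $\gamma > \tau(\alpha)/\alpha - 1/\alpha$, which is not the same condition. Your attempt to patch this at the end by ``taking the infimum over admissible $\alpha$'' does not close the gap, since you never relate $\tau(\alpha)/\alpha$ to $\tau(q)/q$. The paper avoids all of this simply by taking $\alpha=q$: Condition~\ref{C1} gives an $m_0$ with $E\big[\max_{l\le m_0}|X(l)-X(l-1)|\big]^{q}<\infty$, the hypothesis gives $E|M(t)|^{q}=c(q)\,t^{\tau(q)}$, and Corollary~\ref{newComplementKC} with $\alpha=q$, $\beta=\tau(q)-1$ yields the conclusion for $\gamma>\beta/\alpha=\tau(q)/q-1/q$ directly. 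Drop the separate $\alpha$ and set $\alpha=q$; the argument then collapses to three lines, as in the paper.
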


\begin{proof}
By Condition \ref{C1} for $m$ large enough it follows from the multifractal property \eqref{mfdefgeneral} that
\begin{equation*}
E \left[  \max_{l=1,\dots,m} \left| X(lt) - X((l-1)t) \right| \right]^{q} = E |M(t)|^{q} E \left[  \max_{l=1,\dots,m} \left| X(l) - X(l-1) \right| \right]^{q} = C t^{1 + \tau(q)-1}.
\end{equation*}
The claim now follows from Corollary \ref{newComplementKC} with $\alpha=q$ and $\beta=\tau(q)-1$ and by the argument at the beginning of the proof of Theorem \ref{thm:ComplementKolmogorov-Chentsov}.
\end{proof}

In summary, we provide bounds on the support of the multifractal spectrum. We show that the lower bound can be derived using positive order moments and link the non-existing moments with the path properties for the case of $H$-sssi process. In general, negative order moments are not appropriate for explaining the right part of the spectrum. To derive an upper bound on the support of the spectrum, we use negative order moments of the maximum of increments. This avoids the non existence of the negative order moments which is a property of the distribution itself.

\section{Examples}\label{sec4}
In this section we list several examples of stochastic processes and investigate if the Definitions \ref{defD}-\ref{defL} hold. We show how the results of Section \ref{sec3} apply in these cases and also discuss how the multifractal formalism could be achieved. Definitions and further details on the processes considered are given in the Appendix.

\subsection{Self-similar processes}
It follows from Theorem \ref{thm:boundsHsssi} and Corollary \ref{cor:Hsssi} that if $H$-sssi process is ergodic with finite positive order moments, then the spectrum is simply
\begin{equation*}
  d(h) =
  \begin{cases}
  1, & \text{if } \ h=H\\
  -\infty, & \text{otherwise}.
  \end{cases}
\end{equation*}
This applies to all Hermite processes, e.g. BM, FBM and Rosenblatt process. Indeed, Hermite processes have all positive order moments finite and the increments are ergodic (see e.g. \cite[Section 7]{samorodnitsky2007long}). We now discuss heavy tailed examples of $H$-sssi processes.


\subsubsection{Stable processes}
Suppose $\{ X(t)\}$ is an $\alpha$-stable L\'evy motion. $\{ X(t)\}$ is $1/\alpha$-sssi and moment scaling \eqref{mfdefEq} holds but makes sense only for a range of finite moments, that is for $\mathfrak{Q}=(-1,\alpha)$ in  Definition \ref{defM}. For this range of $q$, $\tau(q)=q/\alpha$ and the process is self-similar. Due to infinite moments beyond order $\alpha$ the empirical scaling function \eqref{tauhat} will asymptotically behave for $q>0$ as
\begin{equation*}
  \tau_{\infty}(q)=
  \begin{cases}
  \frac{q}{\alpha}, & \text{if } 0<q\leq \alpha,\\
  1, & \text{if } q>\alpha.
  \end{cases}
\end{equation*}
See \cite{GL} for the precise result. Non-linearity points to multifractality in the sense of Definition \ref{defE}. The spectrum of singularities is given by (\cite{jaffard1999}):
\begin{equation*}
  d(h) =
  \begin{cases}
  \alpha h, & \text{if } \ h \in [0, 1/\alpha],\\
  -\infty, & \text{if } \ h \in (1/\alpha,+\infty].
  \end{cases}
\end{equation*}
Hence the spectrum is nontrivial and supported on $[0, 1/\alpha]$. These are exactly the bounds given in Theorem \ref{thm:boundsHsssi} as in this case $H=1/\alpha$. We stress that even self-similar processes can have multifractal paths and that this is closely related with infinite moments.

We now discuss which form of the scaling function would yield the multifractal spectrum via the Legendre transform. This will highly depend on the range of $q$ over which the infimum in the Legendre transform is taken. For example if we take infimum over $q \in [0,\alpha]$, then we get the correct spectrum from Definitions \ref{defM} and \ref{defE}. Since in practice $\alpha$ is unknown, one can take infimum over $q \in [0,+\infty)$. In this case Definition \ref{defE} yields the formalism, i.e.
\begin{equation*}
d(h)= \inf_{q \in [0,\infty)} \left( hq - \tau_{\infty}(q) +1\right).
\end{equation*}
So even though the moments beyond order $\alpha$ are infinite, estimating infinite moments with the partition function can lead to the correct spectrum of singularities.

\subsubsection{Linear fractional stable motion}
In the same manner we treat linear fractional stable motion (LFSM) (see Appendix for the definition). Dependence introduces a new parameter in the scaling relations and the spectrum. LFSM $\{ X(t)\}$ is $H$-sssi, thus is not multifractal in the sense of Definition \ref{defD}. For the range of finite moments $\mathfrak{Q}=(-1,\alpha)$, Definition \ref{defM} holds with $\tau(q)=Hq$. In this sense process is self-similar. As follows from the results of \cite{GLT} (see also \cite{HeydeSly2008}), empirical scaling function asymptotically behaves for $q>-1$ as
\begin{equation}\label{LFSMtau}
  \tau_{\infty}(q)=
  \begin{cases}
  Hq, & \text{if } 0<q\leq \alpha,\\
  1+q(H-\frac{1}{\alpha}), & \text{if } q>\alpha.
  \end{cases}
\end{equation}
The combined influence of infinite moments and dependence produces concavity, pointing to multifractality in the sense of Definition \ref{defE}. In \cite{balanca2013}, the spectrum was established for $\alpha \in [1,2)$, $H\in (0,1)$ and the long-range dependence case $H>1/\alpha$:
\begin{equation}\label{LFSMspec}
  d(h) =
  \begin{cases}
  \alpha (h-H)+1, & \text{if } \ h \in [H-\frac{1}{\alpha}, H],\\
  -\infty, & \text{otherwise}.
  \end{cases}
\end{equation}
It is known that in the case $H<1/\alpha$ sample paths are nowhere bounded which explains the assumptions. Also, increments of the LFSM are ergodic (see e.g. \cite{cambanis1987ergodic}). Since $\alpha=\overline{q}$ is the tail index, Theorem \ref{thm:boundsHsssi} gives sharp bounds on the support of the spectrum.

One can easily check that multifractal formalism can not be achieved with any of the definitions considered, except the empirical one. Indeed, it holds that
\begin{equation*}
d(h)= \inf_{q \in [0,\infty)} \left( hq - \tau_{\infty}(q) +1\right).
\end{equation*}
It is a curiosity that if we ignorantly estimate the scaling function using non-existing moments we get the correct spectrum.

\subsubsection{Inverse stable subordinator}
Inverse stable subordinator $\{X(t)\}$ is a non-decreasing $\alpha$-ss stochastic process, for some $\alpha \in (0,1)$. However application of the results of the previous section is not straightforward as it has non-stationary increments. Yet we can prove that it has a trivial spectrum defined only in point $\alpha$.

To derive the lower bound we use Theorem \ref{thm:Kolmogorov-Chentsov}. First recall that $a^{\alpha}+ b^{\alpha} \leq (a+b)^{\alpha}$ for $a,b\geq0$ and $\alpha \in (0,1)$. Taking $a=t-s$, $b=s$ when $t\geq s$ and $a=t$, $b=s-t$ when $t<s$ gives that $|t^{\alpha} - s^{\alpha}| \leq |t-s|^{\alpha}$. Since $\{X(t)\}$ has finite moments of every positive order we have for arbitrary $q>0$ and $t,s>0$
\begin{equation*}
E|X(t)-X(s)|^q = |t^{\alpha} - s^{\alpha}|^q E |X(1)|^q \leq E|X(1)|^q |t-s|^{1 + \alpha q - 1}.
\end{equation*}
By Theorem \ref{thm:Kolmogorov-Chentsov} there exists modification which is a.s. locally H\"older continuous of order $\gamma< \alpha-1/q$. Since $q$ can be taken arbitrarily large, we can get the modification such that a.s. $H(t) \geq \alpha$ for every $t \in [0,T]$.

For the lower bound we use Theorem \ref{thm3}. Given $\gamma>\alpha$ we choose $m \in \mathbb{N}$ such that $m>1/(\gamma - \alpha)$. If $\{Y(t)\}$ is the corresponding stable subordinator, from the property $\{ X(t) \leq a \} = \{ Y(a) \geq t \}$ we have for every $t_1<t_2$ and $a>0$
$$\{X(t_2)-X(t_1) \leq a\}= \{ Y_{X(t_1)+a} \geq t_2 \} = \{ Y_{X(t_1)+a} -t_1 \geq t_2 - t_1\}.$$
By \cite[Theorem 4, p. 77]{bertoin1998levy}, for every $t_1>0$, $P(Y_{X(t_1)}>t_1)=1$, thus on this event
$$\{ Y_{X(t_1)+a} -t_1 \geq t_2 - t_1\} \subset \{ Y_{X(t_1)+a} - Y_{X(t_1)} \geq t_2 - t_1\}.$$
Now by the strong Markov property choosing $t$ small enough and stationarity of increments of $\{Y(t)\}$ we have
\begin{align*}
&P \left( \max_{l=1,\dots,m} \left| X(s+lt) - X(s+(l-1)t) \right| \leq C t^{\gamma} \right) \\
&= P \left( X(s+t) - X(s)  \leq C t^{\gamma}, \dots, X(s+mt) - X(s+(m-1)t) \leq C t^{\gamma}\right)\\
&\leq P \left( Y_{X(s)+C t^{\gamma}} - Y_{X(s)} \geq t, \dots, Y_{X(s+(m-1)t)+Ct^{\gamma}} - Y_{X(s+(m-1)t)} \geq t \right)\\
&\leq \left( P \left( Y(Ct^{\gamma}) \geq t \right) \right)^m = \left( P \left( Y(1) \geq C^{-\frac{1}{\alpha}} t^{1- \frac{\gamma}{\alpha}} \right) \right)^m \leq \left( C_1 t^{\gamma-\alpha} \right)^m,
\end{align*}
by the regular variation of the tail. Due to choice of $m$, $m(\gamma-\alpha)>1$. This property of the first-passage process has been noted in \cite[p. 96]{bertoin1998levy}

\subsection{L\'evy processes}
Suppose $\{X(t), t \geq 0\}$ is a L\'evy process. The L\'evy processes in general do not satisfy the moment scaling of the form \eqref{mfdefEq}. The only such examples are the BM and the $\alpha$-stable L\'evy process. It was shown in \cite{GL} that the data from these processes will behave as obeying the scaling relation \eqref{linearrelation}. If $X(1)$ is zero mean with heavy-tailed distribution with tail index $\alpha$ and if $\Delta t_i$ in \eqref{tauhat} is of the form $T^{\frac{i}{N}}$ for $i=1,\dots,N$, then for every $q>0$ as $T,N \to \infty$ the empirical scaling function will asymptotically behave as
\begin{equation}\label{tau}
\tau_{\infty}(q)=
\begin{cases}
\frac{q}{\alpha}, & \text{if } 0<q\leq \alpha \ \& \ \alpha\leq 2,\\
1, & \text{if } q>\alpha \ \& \ \alpha\leq 2,\\
\frac{q}{2}, & \text{if } 0<q\leq \alpha \ \& \ \alpha> 2,\\
\frac{q}{2}+\frac{2(\alpha-q)^2 (2\alpha+4q-3\alpha q)}{\alpha^3 (2-q)^2}, & \text{if } q>\alpha \ \& \ \alpha> 2.
\end{cases}
\end{equation}
See \cite{GL} and \cite{GJLT} for the proof and more details. This shows that estimating the scaling function under infinite moments is influenced by the value of the tail index $\alpha$ and will yield a concave shape of the scaling function.

Local regularity of L\'evy processes has been established in \cite{jaffard1999} and extended in \cite{balanca2013} under weaker assumptions. Denote by $\beta$ the Blumenthal-Getoor (BG) index of a L\'evy process, i.e.
$$\beta=\inf \left\{ \gamma\geq 0 : \int_{|x|\leq 1} |x|^{\gamma} \pi (dx) < \infty \right\},$$
where $\pi$ is the corresponding L\'evy measure. If $\sigma$ is a Brownian component of the characteristic triplet, define
\begin{equation*}
\beta' =
\begin{cases}
\beta , & \text{if } \ \sigma=0,\\
2, & \text{if } \ \sigma\neq 0.\\
\end{cases}
\end{equation*}
Multifractal spectrum of the L\'evy process is given by
\begin{equation}\label{spectrumLevyP}
d(h) =
\begin{cases}
\beta h, & \text{if } \ h \in [0, 1/\beta'),\\
1, & \text{if } \ h = 1/\beta',\\
-\infty, & \text{if } \ h \in (1/\beta',+\infty].
\end{cases}
\end{equation}
Thus the most L\'evy processes have a non-trivial spectrum. Moreover, the estimated scaling function and the spectrum are not related as they depend on the different parts of the L\'evy measure. Behaviour of the estimated scaling function is governed by the tail index which depends on the behaviour of the L\'evy measure at infinity since for $q>0$, $E|X(1)|^q<\infty$ is equivalent to $\int_{|x|>1} |x|^q \pi (dx) < \infty$. On the other hand, the spectrum is determined by the behaviour of $\pi$ around origin, i.e. by the BG index. The discrepancy happens as there is no exact scaling in the sense of \eqref{mfdefgeneral} or \eqref{mfdefEq}. If there is an exact scaling property, like in the case of the stable process, spectrum can be estimated correctly. It is therefore important to check the validity of relation \eqref{linearrelation} from the data. This may be problematic as it is hard to distinguish exact scaling from the asymptotic one exhibited by a large class of processes.

As there is no exact moment scaling, Propositions \ref{prop1} and \ref{prop2} generally do not hold. Thus, in order to establish bounds on the support of the spectrum we use other criteria from Section \ref{sec3}. We present two analytically tractable examples to illustrate the use of these criteria.

\subsubsection{Inverse Gaussian L\'evy process}
Inverse Gaussian L\'evy process is a subordinator such that $X(1)$ has an inverse Gaussian distribution $IG(\delta, \lambda)$, $\delta>0, \lambda\geq 0$, given by the density
\begin{equation*}
f(x)=\frac{\delta}{\sqrt{2\pi}} \rme^{\delta \lambda} x^{-3/2} \exp \left\{ -\frac{1}{2} \left( \frac{\delta^2}{x} + \lambda^2 x \right) \right\}, \quad x>0.
\end{equation*}
The expression for the cumulant reveals that for each $t$ $X(t)$ has $IG(t\delta,\lambda)$ distribution. L\'evy measure is absolutely continuous with density given by
\begin{equation*}
g(x)=\frac{\delta}{\sqrt{2\pi}} y^{-3/2} \exp \left\{ -\frac{\lambda^2 y }{2} \right\}, \quad x>0,
\end{equation*}
thus the BG index is $\beta=1/2$. See \cite{eberlein2004generalized} for more details. Inverse Gaussian distribution has moments of every order finite and for every $q \in \mathbb{R}$ we can express them as
\begin{align*}
E |X(1)|^q &= \int_0^{\infty} x^q f(x) dx = \frac{\delta}{\sqrt{2\pi}} \rme^{\delta \lambda} \left( \frac{2}{\lambda^2} \right)^{q-1/2}  \int_0^{\infty} x^{q-3/2} \exp \left\{ -x - \frac{\delta^2 \lambda^2}{4 x} \right\} dx\\
&=\frac{\delta}{\sqrt{2\pi}} \rme^{\delta \lambda} \left( \frac{2}{\lambda^2} \right)^{q-1/2} K_{-q+\frac{1}{2}} ( \delta \lambda ) 2 \left( \frac{\delta \lambda}{2} \right)^{q-\frac{1}{2}} = \sqrt{\frac{2}{\pi}} \rme^{\delta \lambda} \delta^{q+\frac{1}{2}} \lambda^{-q+\frac{1}{2}} K_{-q+\frac{1}{2}} ( \delta \lambda ).
\end{align*}
where we have used \cite[Equation 10.32.10]{olver2010} and $K_{\nu}$ denotes the modified Bessel function of the second kind. This implies that
\begin{equation*}
E |X(t)|^q = \sqrt{\frac{2}{\pi}} \rme^{t \delta \lambda} t^{q+\frac{1}{2}}  \delta^{q+\frac{1}{2}} \lambda^{-q+\frac{1}{2}} K_{-q+\frac{1}{2}} ( t \delta \lambda ) \sim C t^{q+\frac{1}{2}} t^{-|-q+\frac{1}{2}|}, \ \text{ as } t \to 0,
\end{equation*}
where we have used $K_{\nu}(z) \sim \frac{1}{2} \Gamma(\nu) (\frac{1}{2} z )^{-\nu}$ for $z>0$ and $K_{-\nu}(z)=K_{\nu}(z)$. For any choice of $\gamma>0$ condition (i) of Lemma \ref{lemma:kolconditions} cannot be fulfilled, so the best we can say is that the lower bound is $0$, in accordance with \eqref{spectrumLevyP}. Since negative order moments are finite, Lemma \ref{lemma2} yields the sharp upper bound on the spectrum. Indeed, given $\gamma>1/\beta=2$ we have for $q<1/(2-\gamma)<0$
\begin{equation*}
P \left( |X(T) \leq C t^{\gamma} \right) \leq \frac{E|X(t)|^q}{t^{\gamma q}} \leq C_1 t^{-q (\gamma-2)}.
\end{equation*}
It follows that the upper bound is $2$ which is exactly the reciprocal of the BG index.

\subsubsection{Tempered stable subordinator}
Positive tempered stable distribution is obtained by exponentially tilting the L\'evy density of the totally skewed $\alpha$-stable subordinator, $0<\alpha<1$. Tempered stable subordinator is a L\'evy process $\{X(t)\}$ such that $X(1)$ has positive tempered stable distribution given by the cumulant function
\begin{equation*}
\Phi(\theta) = \log E \left[ \rme^{-\theta X(1)} \right] = \delta \lambda - \delta \left( \lambda^{1/\alpha} + 2 \theta \right)^{\alpha},\quad \theta \geq 0,
\end{equation*}
where $\delta$ is the scale parameter of the stable distribution and $\lambda$ is the tilt parameter. In this case BG index is equal to $\alpha$ (see \cite{schoutens2003levy} for more details). We use Lemma \ref{lemma2} for $\gamma>\alpha$ to get
\begin{equation*}
P \left( |X(T) \leq C t^{\gamma} \right) \leq \rme^{-1} E \left[ \rme^{-\frac{X(t)}{Ct^{\gamma}}} \right] = \rme^{-1} \rme^{t \Phi (C^{-1} t^{-\gamma}) } = O( \rme^{-t^{1-\gamma/\alpha} }), \ \text{ as } t \to 0,
\end{equation*}
As this decays faster than any power of $t$ as $t\to 0$, the upper bound follows.

\subsection{Multiplicative cascade}
Although it is ambiguous what multifractality means, some models are usually studied in this sense. One of the first models of this kind is the multiplicative cascade. Cascades are actually measures, but can be used to construct non-negative increasing multifractal processes. Discrete cascades satisfy only discrete scaling invariance, in the sense that Definition \ref{defM} is valid only for discrete time points. Another drawback of these processes is the non-stationarity of increments.

In \cite{bacry2003}, a class of measures has been constructed having continuous scaling invariance and called multifractal random measures, thus generalizing the earlier cascade models. We will refer to a process obtained from these measures simply as the cascade. Since this is a notable example of a theoretically well developed multifractal process, we analyze it in the view of the results of the preceding section. Furthermore, we consider only one cascade process, the log-normal cascade. One can use cascades as subordinators to BM to build more general models called log-infinitely divisible multifractal processes (see \cite{bacry2003, ludena2008lp} and the references therein).

Following properties hold for the log-normal cascade $\{ X(t)\}$ with parameter $\lambda^2$ (\cite{bacry2008continuous}). $\{ X(t)\}$ satisfies Definition \ref{defD} with the random factor $M(c)=c \rme^{2\Gamma_c}$ where $\Gamma_c$ is Gaussian random variable and can therefore be considered as a true multifractal. Moment scaling holds with
$$\tau(q)=q(1+2 \lambda^2)-2 \lambda^2 q^2.$$
Increments of $\{ X(t) \}$ are heavy-tailed with tail index equal to $2/\lambda^2$ and moments of every negative order are finite provided $\lambda^2<1/2$ (see
 \cite[Proposition 5]{bacry2013lognormal}). Although the asymptotic behaviour of the scaling function defined by \eqref{tauhat} is unknown, there are results for the estimator defined by \eqref{tauhatalternative}. Fixed domain asymptotic properties of this estimator for the multiplicative cascade has been established in \cite{ossiander2000statistical} where it was shown that when $j\to \infty$ estimator \eqref{tauhatalternative} tends a.s. to
\begin{equation}\label{LGNtau}
  \tau_{\infty}(q)=
  \begin{cases}
  h_0^{-} q, & \text{if } q\leq q_0^{-},\\
  q(1+2 \lambda^2)-2 \lambda^2 q^2, & \text{if } q_0^{-} < q < q_0^{+}\\
  h_0^{+} q, & \text{if } q\geq q_0^{+},
  \end{cases}
\end{equation}
where
\begin{align}\label{q0+-}
q_0^{+} &= \inf \{ q \geq 1 : q \tau'(q)-\tau(q) + 1 \leq 0 \}=\frac{1}{\sqrt{2 \lambda^2}},\\
q_0^{-} &= \sup \{ q \leq 0 : q \tau'(q)-\tau(q) + 1 \leq 0 \}=-\frac{1}{\sqrt{2 \lambda^2}}
\end{align}
and $h_0^{+}=\tau'(q_0^{+})$, $h_0^{-}=\tau'(q_0^{-})$. So the estimator \eqref{tauhatalternative} is consistent for a certain range of $q$, while outside this interval the so-called linearization effect happens. Similar results have been established in the mixed asymptotic framework (\cite{bacry2010multifractal}); see also \cite{ludena2014estimating} for a different method. The spectrum of the log-normal cascade is supported on the interval $\left[ 1 + 2 \lambda^2 - 2 \sqrt{2\lambda^2}, 1 + 2 \lambda^2 + 2 \sqrt{2\lambda^2}\right]$, given by
\begin{equation*}
d(h)= \inf_{q \in (-\infty,\infty)} \left( hq - \tau(q) +1\right) = 1- \frac{(h-1 - 2 \lambda^2)^2}{8 \lambda^2},
\end{equation*}
and the multifractal formalism holds (\cite{barral2002multifractal}).

Condition $\tau(q)>1$ of Proposition \ref{prop1} yields $q \in (1,1/(2\lambda^2))$. We then get that
$$H^-=1+2\lambda^2 - 2 \sqrt{2 \lambda^2}.$$
This is exactly the left endpoint of the interval where the spectrum of the cascade is defined, in accordance with Proposition \ref{prop1}. This maximal lower bound is achieved for $q=1/\sqrt{2 \lambda^2}=q^+_0$. If $q^-$ is the point at which maximal lower bound $H^-$ is achieved, then
$$\left( \frac{\tau(q)}{q} - \frac{1}{q} \right)'= \frac{1}{q^2} \left( q \tau'(q) - \tau(q) + 1 \right)$$
must be equal to $0$ at $q^-$. This is exactly defined in \eqref{q0+-}. Although the range of finite moments is not relevant for computing $H^-$ in this case, in general it can depend on $\overline{q}$.

Since all negative order moments are finite we get that
$$\widetilde{H^+}=H^+=1+2\lambda^2 + 2 \sqrt{2 \lambda^2}$$
achieved for $q=-1/\sqrt{2 \lambda^2}$. Thus again the bound from Proposition \ref{prop2} is sharp giving the right endpoint of the interval where the spectrum is defined.

\subsection{Multifractal random walk}
With this example we want to show that we may have $\widetilde{H^+} \neq H^+$ and that the definition of the scaling function needs to be adjusted to avoid infinite moments of negative order.
Multifractal random walk (MRW) driven by the log-normal cascade is a compound process $X(t)=B(\theta(t))$ where $B$ is a BM and $\theta$ is the independent cascade process (see \cite{bacry2003}). Multifractal properties of this process are inherited from those of the underlying cascade. $\{ X(t)\}$ satisfies Definition \ref{defD} with the random factor $M(c)=c^{1/2} \rme^{\Gamma_c}$ where $\Gamma_c$ is Gaussian random variable and the scaling function is given by
$$\tau(q)=q\left( \frac{1}{2} + \lambda^2 \right)-\frac{ \lambda^2}{2} q^2.$$
The range of finite moments is $(-1, 1/\lambda^2)$ as explained in Subsection \ref{subsec3.4}. The spectrum is defined on the interval $\left[ 1/2 +  \lambda^2 - \sqrt{2\lambda^2}, 1/2 +  \lambda^2 + \sqrt{2\lambda^2}\right]$ and given by
\begin{equation*}
d(h)= \inf_{q \in (-\infty,\infty)} \left( hq - \tau(q) +1\right) = 1- \frac{(h-1/2 -  \lambda^2)^2}{2 \lambda^2}.
\end{equation*}
Random factor $M(c)$ is the source of multifractality, has the same scaling function, but all negative order moments are finite. Thus we get
\begin{align*}
H^{-} &= 1/2 +  \lambda^2 - \sqrt{2\lambda^2},\\
\widetilde{H^+} &= \frac{3}{2}+\frac{3 \lambda^2}{2},\\
H^+ &= 1/2 +  \lambda^2 + \sqrt{2\lambda^2}.
\end{align*}
$H^{-}$ and $H^+$ give the sharp bounds, while $\widetilde{H^+}$ is affected by the divergence of the negative order moments. This shows that when the multifractal process has infinite negative order moments, one should specify scaling in terms of the random factor.

\section{Robust version of the partition function}\label{sec5}
In Section \ref{sec3} using Corollary \ref{newComplementKC} we managed to avoid the problematic infinite moments of negative order and prove results like Theorem \ref{thm:boundsHsssi} and Corollary \ref{cor:upperboundMF}. When scaling function \eqref{tauhat} is estimated from the data, spurious concavity may appear for negative values of $q$ due to the effect of diverging negative order moments. We use the idea of Corollary \ref{newComplementKC} to develop a more robust version of the partition function.

Instead of using plain increments in the partition function \eqref{partitionfun}, we can use the maximum of some fixed number $m$ of the same length increments. This will make negative order moments finite for some reasonable range and prevent divergencies. The underlying idea also resembles the wavelet leaders method where leaders are formed as the maximum of the wavelet coefficients over some time scale (see \cite{jaffard2004wavelet}). Since $m$ is fixed, this does not affect the true scaling. Same idea can be used for $q>0$ as Lemma \ref{lemma:kolconditions} indicates this condition can also explain the spectrum. It is important to stress that the estimation of the scaling function makes sense only if the underlying process is known to possess scaling property of the type \eqref{mfdefgeneral}.

Suppose $\{X(t)\}$ has stationary increments and $X(0)=0$. Divide the interval $[0,T]$ into $\lfloor T / (m \Delta t) \rfloor$ blocks each consisting of $m$ increments of length $\Delta t$ and define the modified partition function:
\begin{equation}\label{modifiedpartitionfun}
\widetilde{S}_q(T,\Delta t) = \frac{1}{\lfloor T / (m \Delta t) \rfloor} \sum_{i=1}^{\lfloor T /  (m \Delta t) \rfloor} \max_{l=1,\dots,m} \left| X ( i m \Delta t+l \Delta t) -  X ( i m \Delta t+ (l-1) \Delta t) \right|^q.
\end{equation}
One can see $\widetilde{S}_q(T,\Delta t)$ as a natural estimator of \eqref{thmmomentcond}. Analogously we define the modified scaling function as in \eqref{tauhat} by using $\widetilde{S}_q(n,\Delta t_i)$:
\begin{equation}\label{modifiedtauhat}
\widetilde{\tau}_{N,T}(q) = \frac{\sum_{i=1}^{N}  \ln {\Delta t_i}  \ln \widetilde{S}_q(n,\Delta t_i) - \frac{1}{N} \sum_{i=1}^{N} \ln {\Delta t_i} \sum_{j=1}^{N} \ln \widetilde{S}_q(n,\Delta t_i) }{ \sum_{i=1}^{N} \left(\ln {\Delta t_i}\right)^2 - \frac{1}{N} \left( \sum_{i=1}^{N} \ln {\Delta t_i} \right)^2 }.
\end{equation}
One can alter the definition only for $q<0$ although there is no much difference between two forms when $q>0$.

To illustrate how this modification makes the scaling function more robust we present several examples comparing \eqref{tauhat} and \eqref{modifiedtauhat}. We generate sample paths of several processes and estimate the scaling function by both methods. We also estimate the spectrum numerically using \eqref{formalism}. Results are shown in Figures \ref{Fig1}-\ref{Fig4}. Each figure shows the estimated scaling functions and the estimated spectrum by using standard definition \eqref{tauhat} and by using \eqref{modifiedtauhat}. We also added the plots of the scaling function that would yield the correct spectrum via multifractal formalism and the true spectrum of the process.

For the BM (Figure \ref{Fig1}) and the $\alpha$-stable L\'evy process (Figure \ref{Fig2}) we generated sample paths of length $10000$ and we used $\alpha=1$ for the latter. LFSM (Figure \ref{Fig3}) was generated using $H=0.9$ and $\alpha=1.2$ with path length $15784$ (see \cite{stoev2004simulation} for details on the simulation algorithm used). Finally, MRW of length $10000$ was generated with $\lambda^2=0.025$ (Figure \ref{Fig4}). For each case we take $m=20$ in defining the modified partition function \eqref{modifiedpartitionfun}.

In all the examples considered, the modified scaling function is capable of yielding the correct spectrum of the process with the multifractal formalism. As opposed to the standard definition, it is unaffected by diverging negative order moments. Moreover, it captures the divergence of positive order moments which determines the shape of the spectrum.

\begin{figure}[H]
  \centering
  \subfigure[]{\centering \includegraphics[width=0.43\textwidth]{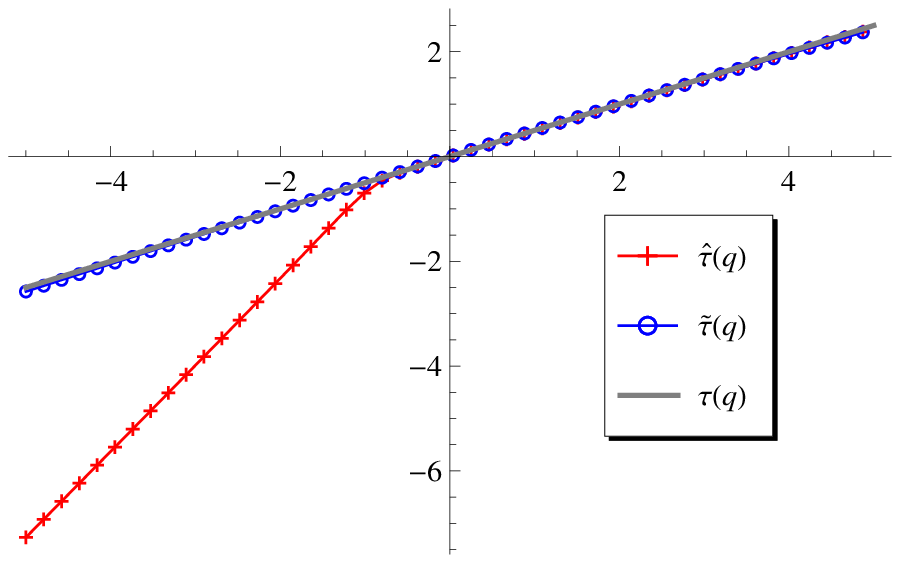} \label{Fig1tau}} \qquad
  \subfigure[]{\centering\includegraphics[width=0.43\textwidth]{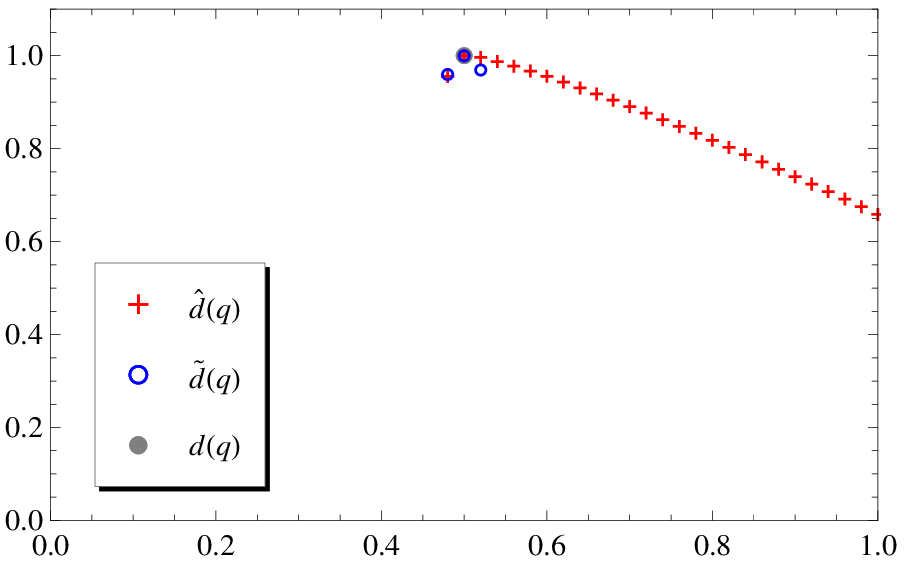} \label{Fig1spe}}
  \caption{Brownian motion - scaling functions (a) and spectrum (b)}  \label{Fig1}
\end{figure}

\begin{figure}[H]
  \centering
  \subfigure[]{\centering \includegraphics[width=0.43\textwidth]{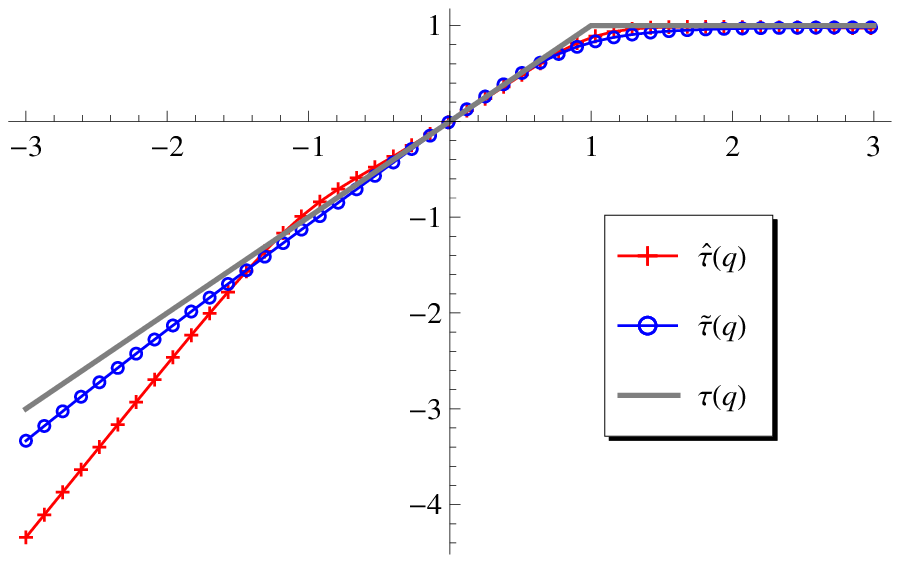} \label{Fig2tau}} \qquad
  \subfigure[]{\centering\includegraphics[width=0.43\textwidth]{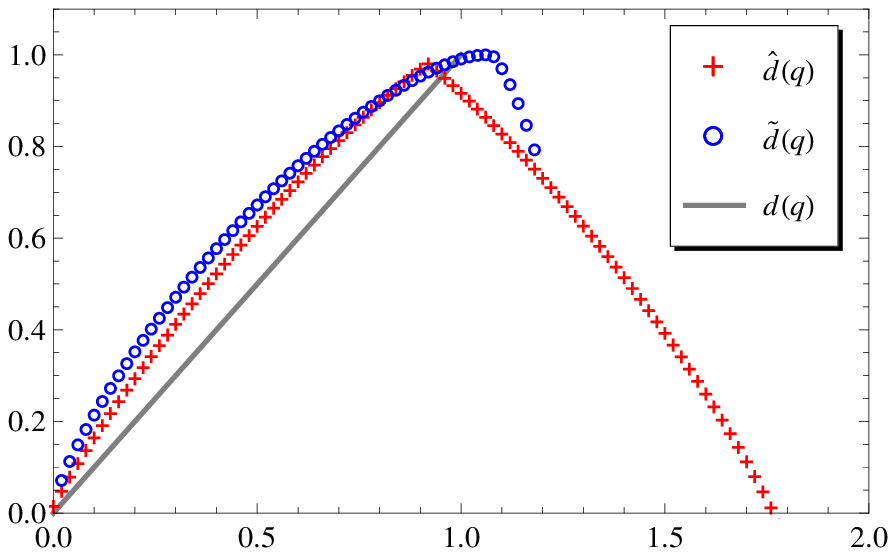} \label{Fig2spe}}
  \caption{Stable L\'evy process $\alpha=1$ - scaling functions (a) and spectrum (b)}  \label{Fig2}
\end{figure}

\begin{figure}[H]
  \centering
  \subfigure[]{\centering \includegraphics[width=0.43\textwidth]{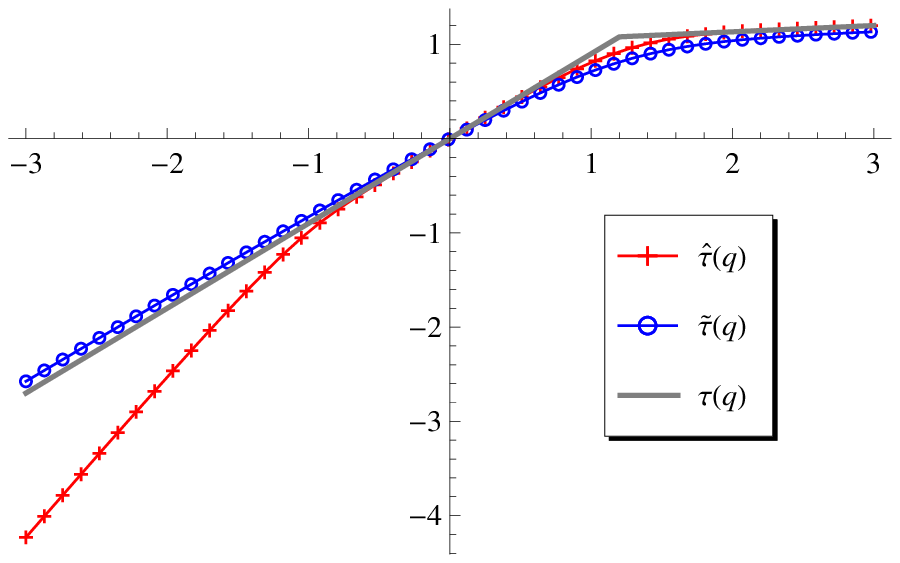} \label{Fig3tau}} \qquad
  \subfigure[]{\centering\includegraphics[width=0.43\textwidth]{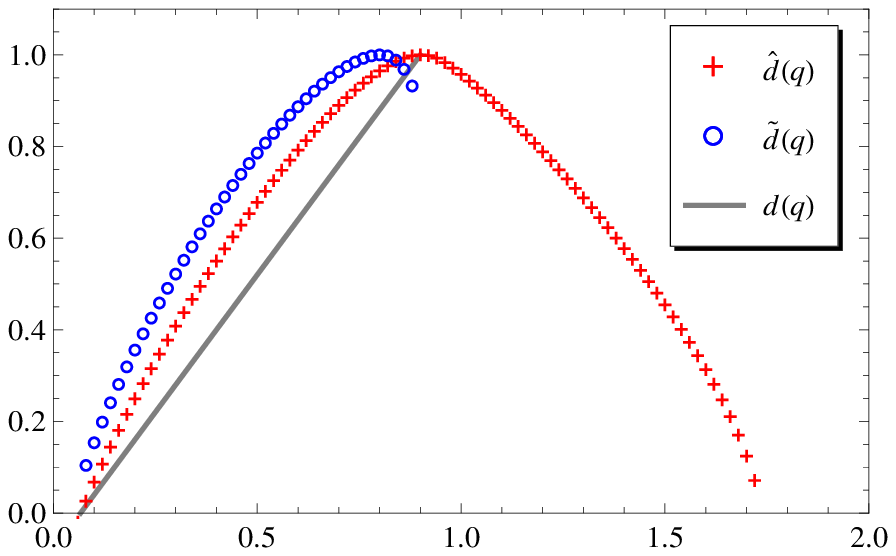} \label{Fig3spe}}
  \caption{Linear fractional stable motion $H=0.9$, $\alpha=1.2$ - scaling functions (a) and spectrum (b)}  \label{Fig3}
\end{figure}

\begin{figure}[H]
  \centering
  \subfigure[]{\centering \includegraphics[width=0.43\textwidth]{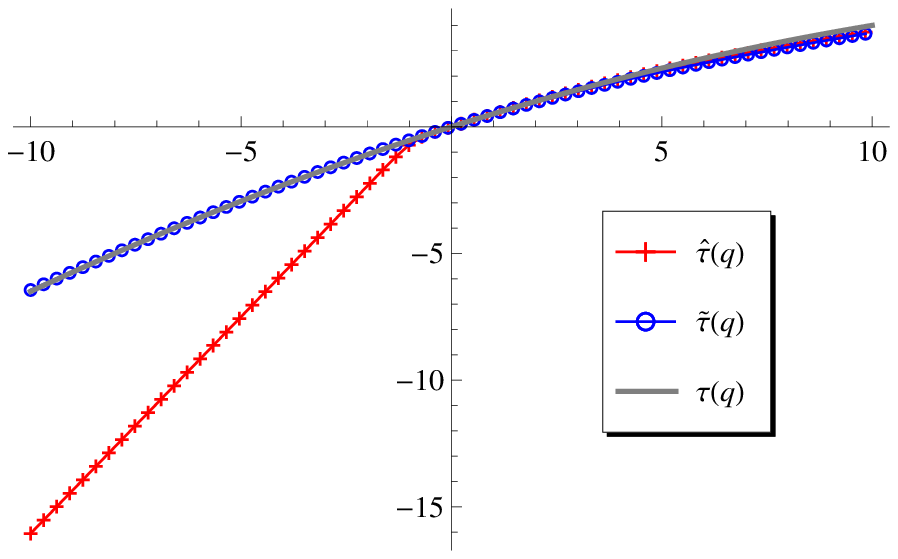} \label{Fig4tau}} \qquad
  \subfigure[]{\centering\includegraphics[width=0.43\textwidth]{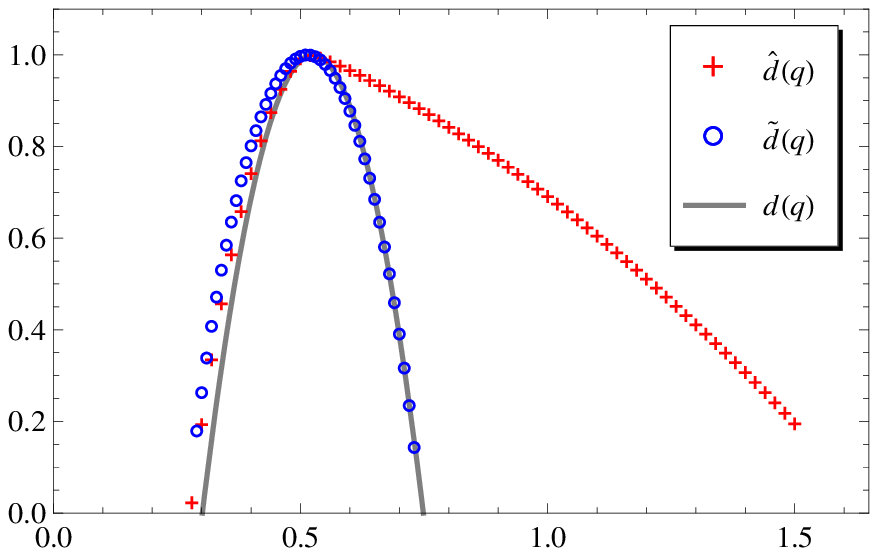} \label{Fig4spe}}
  \caption{Multifractal random walk $\lambda^2=0.025$ - scaling functions (a) and spectrum (b)}  \label{Fig4}
\end{figure}

\section*{Appendix}

We provide a brief overview of different classes of stochastic processes that are used along the paper.

Hermite process $\{Z_H^{(k)}(t), t \geq 0 \}$ with $H \in (1/2,1)$ and $k \in \mathbb{N}$ can be defined as
\begin{equation*}
Z_H^{(k)} (t) = C(H,k) \int_{\mathbb{R}^k}^{'}  \int_0^t \left( \prod_{j=1}^k (s-y_j)_{+}^{-( \frac{1}{2} + \frac{1-H}{k})} \right) \rmd s \rmd B(y_1) \cdots \rmd B(y_k), \quad t \geq 0,
\end{equation*}
where $\{B(t) \}$ is the standard BM and the integral is taken over $\mathbb{R}^k$ except the hyperplanes $y_i=y_j$, $i \neq j$. Constant $C(H,k)$ is chosen such that the $E [Z_H^{(k)}(1) ]^2=1$ and $(x)_{+}=\max (x,0)$. Hermite processes are $H$-sssi. For $k=1$ one gets the FBM and for $k=2$ Rosenblatt process. See e.g. \cite{embrechts2002} for more details.

L\'evy process is a process with stationary and independent increments starting form $0$. Given an infinitely divisible distribution there exists a  L\'evy process such that $X(1)$ has this distribution. Moreover, characteristic function can be uniquely represented by the L\'evy-Khintchine formula. See \cite{bertoin1998levy} and \cite{schoutens2003levy} for more details.

$\alpha$-stable L\'evy process is a process such that $X(1)$ has stable distribution with stability index $0<\alpha<2$. In general, a random variable $X$ has an $\alpha$-stable distribution with index of stability $\alpha \in (0,2)$, scale parameter $\sigma \in (0,\infty)$, skewness parameter $\beta\in [-1,1]$ and shift parameter $\mu \in \mathbb{R}$, denoted by $X \sim S_{\alpha}(\sigma,\beta,\mu)$ if its characteristic function has the following form
\begin{equation*}
E \exp \{ i\zeta X \} =
\begin{cases}
\exp\left\{-\sigma^{\alpha}|\zeta|^{\alpha}\left( 1-i\beta{\rm sign}(\zeta)\tan{\frac{\alpha\pi}{2}}+i\zeta\mu\right) \right\}, & \text{if } \alpha\ne1,\\
\exp\left\{-\sigma|\zeta|\left(1-i\beta\frac{2}{\pi}{\rm sign}(\zeta)\ln{|\zeta|}+i\zeta\mu\right)\right\}, & \text{if } \alpha=1,
\end{cases}
\quad \zeta \in \mathbb{R}.
\end{equation*}
Stable L\'evy process is $1/\alpha$-sssi.

Linear fractional stable motion (LFSM) is an example of a process with heavy-tailed and dependent increments. LFSM can be defined through the stochastic integral
\begin{equation*}
X(t)=\frac{1}{C_{H,\alpha}} \int_{\mathbb{R}} \left( (t-u)_{+}^{H-1/\alpha} - (-u)_{+}^{H-1/\alpha} \right) dL_{\alpha}(u),
\end{equation*}
where $\{L_{\alpha}\}$ is a strictly $\alpha$-stable L\'evy process, $\alpha \in (0,2)$, $0<H<1$ and $(x)_{+}=\max (x,0)$. The constant $C_{H,\alpha}$ is chosen such that the scaling parameter of $X(1)$ equals $1$, i.e.
\begin{equation*}
C_{H,\alpha} = \left( \int_{\mathbb{R}} \left| (1-u)_{+}^{H-1/\alpha} - (-u)_{+}^{H-1/\alpha} \right|^{\alpha} du \right)^{1/ \alpha}.
\end{equation*}
It is then called standard LFSM. The LFSM is $H$-sssi. Setting $\alpha=2$ in the definition reduces the LFSM to the FBM. By analogy to this process, the case $H>1/\alpha$ is referred to as a long-range dependence and the case $H<1/\alpha$ as negative dependence. The parameter $\alpha$ governs the tail behaviour of the marginal distributions implying, in particular, that $E|X(t)|^q=\infty$ for $q \geq \alpha$. For more details see \cite{samorodnitsky1994}.

A L\'evy process $\{Y(t)\}$ such that $Y(1) \sim S_{\alpha}(\sigma,1,0)$, $0<\alpha<1$ is referred to as the stable subordinator. It is nondecreasing and $1/\alpha$-sssi. Inverse stable subordinator $\{ X(t)\}$ is defined as
\begin{equation*}
X(t) = \inf \left\{ s >0 : Y(u) >t \right\}.
\end{equation*}
It is $\alpha$-ss with dependent, non-stationary increments and corresponds to a first passage time of the stable subordinator strictly above level $t$. For more details see \cite{meerschaert2013inverse} and references therein.

\bigskip



\bibliographystyle{agsm}
\bibliography{References}

\end{document}